\newtheorem{theorem}{Theorem}
\theoremstyle{definition}
\newtheorem{lemma}{Lemma}
\newtheorem{remark}{Remark}
\newcommand{\n}[1]{\| #1 \|} %------- norm
\renewcommand{\a}{\alpha}
\renewcommand{\b}{\beta}
\renewcommand{\c}{\gamma}
\renewcommand{\d}{\delta}
\newcommand{\D}{\Delta}
\newcommand{\la}{\lambda}
\renewcommand{\t}{\tau}
\newcommand{\s}{\sigma}
\newcommand{\e}{\varepsilon}
\renewcommand{\th}{\theta}
\newcommand{\x}{\bar x}
\newcommand{\R}{\mathbb R}
\newcommand{\N}{\mathbb N}
\newcommand{\Z}{\mathbb Z}
\newcommand{\g}{\geq}
\renewcommand{\l}{\leq}
\newcommand{\lr}[1]{\langle #1\rangle} %----- triangle brackets
\newcommand{\hx}{\hat x}
\newcommand{\hy}{\hat y}
\DeclareMathOperator{\prox}{prox}
\DeclareMathOperator{\argmin}{argmin}
\DeclareMathOperator{\dom}{dom}
\begin{document}
\title{A first-order primal-dual algorithm with linesearch}

\author{Yura Malitsky$^*$ \and Thomas Pock\thanks{Institute for
Computer Graphics and Vision, Graz University of Technology, 8010
Graz,Austria. E-mail:~\href{mailto:y.malitsky@gmail.com}{y.malitsky@gmail.com},
\href{mailto:pock@icg.tugraz.at}{pock@icg.tugraz.at}}}
\date{}

\maketitle
\begin{abstract}
    The paper proposes a linesearch for a primal-dual method. Each
iteration of the linesearch requires to update only the dual (or primal)
variable. For many problems, in particular for regularized least
squares, the linesearch does not require any additional matrix-vector
multiplications. We prove convergence of the proposed method under
standard assumptions. We also show an ergodic $O(1/N)$ rate of
convergence for our method.  In case one or both of the prox-functions
are strongly convex, we modify our basic method to get a  better convergence rate.
Finally, we propose a linesearch for a saddle point
problem with an additional smooth term. Several numerical experiments confirm the
efficiency of our proposed methods.
\end{abstract}

{\small
\noindent
{\bfseries 2010 Mathematics Subject Classification:}
{49M29 65K10 65Y20 90C25}

\smallskip

\noindent {\bfseries Keywords:} Saddle-point problems, first-order
 algorithms, primal-dual algorithms, linesearch, convergence rates, backtracking }

 \bigskip

In this work we propose a linesearch procedure for the primal-dual
algorithm (PDA) that was introduced in \cite{chambolle2011first}. It
is a simple first-order method that is widely used for solving
nonsmooth composite minimization problems. Recently, it was shown the
connection of PDA with proximal point algorithms \cite{he-yuan:2012}
and ADMM \cite{chambolle2016introduction}. Some generalizations of the
method were considered in
\cite{pock:ergodic,goldstein2013adaptive,Condat2013}. A survey of
possible applications of the algorithm can be found in
\cite{komodakis:playing,chambolle2016introduction}.

The basic form of PDA uses fixed step sizes during all
iterations. This has several drawbacks.
%requires knowing the norm of the operator that has
%several drawbacks.
First, we have to compute the norm of the operator, which may be quite
expensive for large scale dense matrices.  Second, even if we know
this norm, one can often use larger steps, which usually yields a
faster convergence. As a remedy for the first issue one can use a
diagonal precondition \cite{pock2011diagonal}, but still there is no
strong evidence that such a precondition improves or at least does not
worsen the speed of convergence of PDA.  Regarding the second issue,
as we will see in our experiments, the speed improvement gained by
using the linesearch sometimes can be significant.

Our proposed analysis of PDA exploits the idea of recent works
\cite{malitsky15,malitsky2016proximal} where several algorithms are
proposed for solving a monotone variational inequality. Those
algorithms are different from the PDA; however, they also use a
similar extrapolation step. Although our analysis of the primal-dual
method is not so elegant as, for example, that in \cite{he-yuan:2012},
it gives a simple and a cheap way to incorporate the linesearch for
defining the step sizes.  Each inner iteration of the linesearch
requires updating only the dual variables. Moreover, the step sizes
may increase from iteration to iteration. We prove the convergence of
the algorithm under quite general assumptions. Also we show that in
many important cases the PDAL (primal-dual algorithm with linesearch)
preserves the complexity of the iteration of PDA. In particular, our
method, applied to the regularized least-squares problems, uses the
same number of matrix-vector multiplication per iteration as the
forward-backward method or FISTA \cite{fista} (both with fixed step size) does, but
does not require to know the matrix norm and, in addition, uses
adaptive steps.

For the case when the primal or dual objectives are strongly convex, we
modify our linesearch procedure in order to construct accelerated
versions of PDA. This is done in a similar way as in
\cite{chambolle2011first}. The obtained algorithms share the same
complexity per iteration as PDAL does, but in many cases substantially outperform
PDA and PDAL.

We also consider a more general primal-dual problem which involves an
additional smooth function with Lipschitz-continuous gradient (see
\cite{Condat2013}). For this case we generalize our linesearch to
avoid knowing that Lipschitz constant.

The authors in~\cite{goldstein2013adaptive,goldstein2015adaptive} also
proposed a linesearch for the primal-dual method, with the goal to
vary the ratio between primal and dual steps such that primal and dual
residuals remain roughly of the same size. The same idea was used
in~\cite{he2000alternating} for the ADMM method.  However, we should
highlight that this principle is just a heuristic, as it is not clear
that it in fact improves the speed of convergence.  The linesearch
proposed in~\cite{goldstein2013adaptive,goldstein2015adaptive}
requires an update of both primal and dual variables, which may make
the algorithm much more expensive than the basic PDA.  Also the
authors proved convergence of the iterates only in the case when one of the
sequences $(x^k)$ or $(y^k)$ is bounded. Although this is often the
case, there are many problems which can not be encompassed by this
assumption. Finally, it is not clear how to derive accelerated
versions of that algorithm.

As a byproduct of our analysis, we show how one can use a variable ratio
between primal and dual steps and under which circumstances we can
guarantee convergence. However, it was not the purpose of this paper
to develop new strategies for varying such a ratio during iterations.

The paper is organized as follows. In the next section we introduce
the notations and recall some useful facts.
\Cref{sec:linesearch} presents our basic primal-dual algorithm
with linesearch. We prove its convergence, establish its ergodic
convergence rate and consider some particular examples of how the
linesearch works. In \cref{sec:acceler} we propose
accelerated versions of PDAL under the assumption that the primal or dual
problem is strongly convex. \Cref{sec:general_saddle} deals
with more general saddle point problems which involve an additional
smooth function.  In \cref{sec:experiment} we illustrate the
efficiency of our methods for several typical problems.

\section{Preliminaries}\label{prelim}
Let $X$, $Y$ be two finite-dimensional real vector spaces equip\-ped with an inner
product $\lr{\cdot,\cdot}$ and a norm $\n{\cdot} =
\sqrt{\lr{\cdot,\cdot}}$. We are focusing on the following problem:
\begin{equation}
    \label{saddle}
    \min_{x\in X}\max_{y \in Y}\lr{Kx,y} + g(x) - f^*(y),
\end{equation}
where
\begin{itemize}
    \item $K\colon X\to Y$ is a bounded linear operator, with the operator
    norm $L=\n{K}$;
    \item $g\colon X\to (-\infty,+\infty]$ and $f^*\colon Y\to
    (-\infty,+\infty]$  are  proper lower semicontinuous convex
    (l.s.c.)  functions;

    \item problem~\eqref{saddle} has a saddle point.
\end{itemize}
Note that $f^*$ denotes the Legendre--Fenchel conjugate of a convex
l.s.c.\ function $f$.
By a slight (but common) abuse of notation, we write
$K^*$ to denote the adjoint of the operator $K$.

Under the above assumptions, \eqref{saddle} is equivalent to the primal problem
\begin{equation}
    \label{eq:primal}
    \min_{x\in X} f(Kx)+g(x)
\end{equation}
and the dual problem:
\begin{equation}
    \label{eq:dual}
    \min_{y\in Y} f^*(y)+g^*(-K^*x).
\end{equation}

Recall that for a proper  l.s.c.\ convex function $h\colon X \to (-\infty,
+\infty]$  the proximal operator $\prox_{h}$ is defined as
$$\prox_h\colon X\to X\colon x\mapsto \argmin_z \bigl\{h(z) + \frac 1 2 \n{z-x}^2\bigr\}.$$
The following important characteristic property of the proximal
operator is well known:
\begin{equation}\label{prox_charact}
\x = \prox_{h}x \quad \Leftrightarrow \quad  \lr{\x-x, y - \x}\g h(\x) - h(y) \quad \forall y\in X.
\end{equation}
We will often use the following identity (cosine rule):
\begin{equation}
    \label{eq:cosine}
    2\lr{a-b, c-a} = \n{b-c}^2 - \n{a-b}^2 - \n{a-c}^2 \quad \forall a,b,c \in X.
\end{equation}

Let $(\hx, \hy)$ be a saddle point of problem \eqref{saddle}. Then
by the definition of the saddle point we have
\begin{align}
      & P_{\hx,\hy}(x):= g(x) - g(\hx) + \lr{K^*\hy,x-\hx} \geq
        0\quad \forall x\in X, \label{primal_gap}\\
      &  D_{\hx,\hy}(y):= f^*(y)-f^*(\hy) - \lr{K\hx,y - \hy} \geq 0
        \quad \forall y\in Y. \label{dual_gap}
    \end{align}
    The expression
    $\mathcal{G}_{\hx,\hy}(x,y) = P_{\hx,\hy}(x)+D_{\hx,\hy}(y)$ is
    known as a primal-dual gap.
In certain cases when it is clear which saddle
point is considered, we will omit the subscript in $P$, $D$, and $\mathcal{G}$.
It is also important to highlight that for fixed $(\hx,\hy)$ functions
$P(\cdot)$, $D(\cdot)$, and $\mathcal{G}(\cdot, \cdot)$ are convex.

Consider the original primal-dual method:
\begin{align*}
  y^{k+1} & = \prox_{\sigma f^*} (y^k + \sigma K \x^k )\\
  x^{k+1} & = \prox_{\t g}(x^k - \tau K^* y^{k+1})\\
  \x^{k+1} &  = x^{k+1} + \th(x^{k+1} - x^{k}).
\end{align*}
In \cite{chambolle2011first} its convergence was proved under
assumptions $\th = 1$,  $\t, \s>0$, and $\t\s L^2<1$. In the next
section we will show how to incorporate a linesearch into this method.

\section{Linesearch}\label{sec:linesearch}
The primal-dual algorithm with linesearch (PDAL) is summarized in~\Cref{alg:A1}.

\begin{algorithm}[!ht]\caption{\textit{Primal-dual algorithm with linesearch}}
    \label{alg:A1}
\begin{algorithmic}
   \STATE {\bfseries Initialization:}
Choose $x^0\in X$, $y^1\in Y$, $\t_0 > 0$,  $\mu\in (0,1), \d
\in (0,1)$, and $\b>0$. Set $\th_0 = 1$.
\STATE {\bfseries Main iteration:}
\STATE 1. Compute  $$x^{k} = \prox_{\t_{k-1} g}(x^{k-1} -
\t_{k-1} K^*y^{k}).$$
\STATE 2. Choose any $\t_k \in [\t_{k-1}, \t_{k-1}\sqrt{1+\th_{k-1}}]$ and run\\
\STATE ~~~ \textbf{Linesearch:}
\STATE ~~~ 2.a. Compute
\vspace{-3ex}
\begin{align*}
  \th_k & = \frac{\t_k}{\t_{k-1}}\\
  \x^k & = x^k + \th_k (x^k - x^{k-1})\\
  y^{k+1} & = \prox_{\b \t_k f^*}(y^k + \b\t_k K\x^k)
\end{align*}
	\vspace{-3ex}
    \STATE   ~~~2.b. Break linesearch if
\begin{equation}\label{stop_crit}
    \sqrt{\b}\t_k \n{K^*y^{k+1} -K^*y^k} \l \d \n{y^{k+1} -y^k}
\end{equation}
\vspace{-3ex}
\STATE ~~~~~ \quad Otherwise, set $\t_k:= \t_k \mu$ and go to 2.a.
\STATE ~~~\textbf{End of linesearch}

\end{algorithmic}
\end{algorithm}
Given all information from the current iterate: $x^k$, $y^k$,
$\t_{k-1}$, and $\th_{k-1}$, we first choose some trial step
$\t_k \in [\t_{k-1}, \t_{k-1}\sqrt{1+\th_{k-1}}]$, then during every
iteration of the linesearch it is decreased by $\mu\in (0,1)$. At the
end of the linesearch we obtain a new iterate $y^{k+1}$ and a step
size $\t_k$ which will be used to compute the next iterate
$x^{k+1}$. In \Cref{alg:A1} there are two opposite options: always
start the linesearch from the largest possible step
$\t_k = \t_{k-1}\sqrt{1+\th_{k-1}}$, or, the contrary, never increase
$\t_k$. Step~2 also allows us to choose compromise between them.

Note that in PDAL parameter $\b$ plays the role of the ratio
$\frac{\sigma}{\tau}$ between fixed steps in PDA. Thus, we can
rewrite the stopping criteria~\eqref{stop_crit} as
$$    \t_k \s_k \n{K^*y^{k+1} -K^*y^k}^2 \l \d^2 \n{y^{k+1} -y^k}^2,$$
where $\s_k = \b \t_k$. Of course in PDAL we can always choose fixed
steps $\t_k$, $\s_k$ with $\t_k \s_k \leq \frac{\d^2}{L^2}$ and set
$\th_k=1$. In this case PDAL will coincide with PDA, though our
proposed analysis seems to be new.  Parameter $\d$ is mainly used for
theoretical purposes: in order to be able to control
$\n{y^{k+1}-y^k}^2$, we need $\d<1$. For the experiments $\d$ should
be chosen very close to $1$.

\begin{remark}\label{rem:ls1}
    It is clear that each iteration of the linesearch requires
computation of $\prox_{\b \t_k f^*}(\cdot)$ and $K^*y^{k+1}$. As in
problem~\eqref{saddle} we can always exchange primal and dual
variables, hence it makes sense to choose for the dual variable in PDAL the one
for which the respective prox-operator is simpler to compute.  Note that during the
linesearch we need to compute  $Kx^k$ only once and then use that
$K\x^k = (1+\th_k)Kx^k - \th_k Kx^{k-1}$.
\end{remark}

\begin{remark}\label{rem:linear} Note that when $\prox_{\s
    f^*}$ is a linear (or affine) operator, the linesearch becomes extremely simple:
    it does not require any additional matrix-vector multiplications. We itemize some examples below:
\end{remark}
\begin{enumerate}
    \item $f^*(y) = \lr{c,y}$. Then it is easy to verify that $\prox_{\sigma f^*} u = u - \sigma
    c$. Thus, we have
    \begin{align*}
      y^{k+1} & = \prox_{\sigma_k f^*}(y^k+\sigma_k K\x^k) =  y^k+\sigma_k
                K\x^k-\sigma_k c \\
      K^*y^{k+1} & = K^*y^k +\sigma_k (K^*K\x^k-K^*c)
    \end{align*}
    %$y^{k+1} = \prox_{\sigma_k f^*}(y^k+\sigma_k K\x^k) =
    %y^k+\sigma_k K\x^k-\sigma_k c $ and $K^*y^{k+1} = K^*y^k +\sigma_k (K^*K\x^k-K^*c)$

\item $f^*(y) = \frac{1}{2} \n{y - b}^2$. Then $\prox_{\sigma f^*}u =
\frac{u+\sigma b}{1+\sigma}$ and we obtain
    \begin{align*}
      y^{k+1} & = \prox_{\sigma_k f^*}(y^k+\sigma_k K\x^k) =
                \frac{y^k+\sigma_k(K\x^k +b)}{1+\sigma_k } \\
      K^*y^{k+1} & = \frac{1}{1+\sigma_k }(K^*y^k +\sigma_k(K^*K\x^k + K^*b))
    \end{align*}

    \item $f^*(y) = \delta_H(y)$, the indicator function of the
    hyperplane $H=\{u\colon \lr{u,a} = b \}$. Then $\prox_{\sigma f^*}u
    = P_H u = u + \frac{b-\lr{u,a}}{\n{a}^2}a$. And hence,
    \begin{align*}
      y^{k+1} & = \prox_{\sigma_k f^*}(y^k+\sigma_k K\x^k) =
                y^k+\sigma_kK\x^k + \frac{b-\lr{a,y^k+\sigma_k K\x^k}}{\n{a}^2}a \\
      K^*y^{k+1} & = K^*y^k +\sigma_kK^*K\x^k + \frac{b-\lr{a,y^k+\sigma_k K\x^k}}{\n{a}^2}K^*a.
    \end{align*}

\end{enumerate}
Evidently, each iteration of PDAL for all the cases above requires
only two matrix-vector multiplications. Indeed, before the linesearch
starts we have to compute $Kx^k$ and $K^*Kx^k$ and then during the
linesearch we should use the following relations:
\begin{align*}
  K\x^k & = (1+\th_k)Kx^k - \th_k Kx^{k-1}\\
  K^*K\x^k & = (1+\th_k)K^*Kx^k - \th_k K^*Kx^{k-1},
\end{align*}
where $Kx^{k-1}$ and $K^*Kx^{k-1}$ should be reused from the previous
iteration. All other operations are comparably cheap and, hence, the
cost per iteration of PDA and PDAL are almost the same. However, this
might not be true if the matrix $K$ is very sparse. For example, for
a finite differences operator the cost of a matrix-vector
multiplication is the same as the cost of a few vector-vector
additions.

One simple implication of these facts is that for the regularized
least-squares problem
\begin{equation}\label{least_square}
    \min_x \frac 1 2 \n{Ax-b}^2 + g(x),
\end{equation}
our method does not require one to know $\n{A}$ but at the same time does
not require any additional matrix-vector multiplication as it is the
case for standard first order methods with backtracking (e.g. proximal
gradient method, FISTA). In fact, we can rewrite \eqref{least_square} as
\begin{equation}\label{least_square_saddle}
    \min_x\max_y g(x) +\lr{Ax,y} - \frac{1}{2}\n{y+b}^2.
\end{equation}
Here $f^*(y) = \frac 1 2 \n{y+b}^2$ and hence, we are in the situation where
the operator $\prox_{f^*}$ is affine.

By  construction of the algorithm we simply have the following:
\begin{lemma}\label{well-defined} $ $

    \begin{description}
        \item[(i)]     The linesearch in PDAL always terminates.
        \item[(ii)]     There exists $\tau>0$ such that $\t_k> \tau$
        for all $k\geq 0$.
        \item[(iii)]     There exists $\th>0$ such that $\th_k \leq \th$
        for all $k\geq 0$.
    \end{description}

\end{lemma}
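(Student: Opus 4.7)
The plan rests on a single observation: since $K$ has operator norm $L$, one always has $\n{K^*y^{k+1} - K^*y^k} \l L\n{y^{k+1}-y^k}$, so the stopping test~\eqref{stop_crit} is automatically satisfied whenever $\sqrt{\b}\,\t_k L \l \d$, i.e., whenever $\t_k \l \bar\t := \d/(L\sqrt{\b})$. Part (i) is then immediate: inside a single linesearch the current trial $\t_k$ is multiplied by $\mu\in(0,1)$ after each failed test, so after finitely many rejections the sufficient condition $\t_k\l\bar\t$ holds and the linesearch exits.

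For part (ii), I would set $\tau := \min\{\t_0,\mu\bar\t\}$ and prove $\t_k \g \tau$ by induction on $k$. If no reduction takes place during iteration $k$, the accepted $\t_k$ is exactly the initial trial and therefore satisfies $\t_k \g \t_{k-1} \g \tau$ by the inductive hypothesis. If at least one reduction occurs, then the last rejected trial $\t_k/\mu$ failed the stopping test, and by the observation above this forces $\t_k/\mu > \bar\t$, whence $\t_k > \mu\bar\t \g \tau$. In either case $\t_k \g \tau > 0$.

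For part (iii), the constraint $\t_k \in [\t_{k-1},\t_{k-1}\sqrt{1+\th_{k-1}}]$ on the initial trial, together with the fact that the linesearch only decreases $\t_k$ further, yields $\th_k = \t_k/\t_{k-1} \l \sqrt{1+\th_{k-1}}$ for every $k$. The map $t\mapsto\sqrt{1+t}$ is increasing with unique positive fixed point $\varphi=(1+\sqrt 5)/2$ (the golden ratio, satisfying $\varphi^2=\varphi+1$), and $\th_0=1<\varphi$. A straightforward induction then gives $\th_k\l\varphi$ for all $k$.

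I expect no serious obstacle: everything follows quickly once the sufficient condition $\t_k\l\bar\t$ for passing the test is isolated. The only point that requires a little care is in (ii), where one must not confuse the initial trial (which is $\g\t_{k-1}$) with the accepted value (which can be strictly smaller after reductions), so that the case split correctly extracts the bound $\mu\bar\t$ from the final rejected trial rather than from the accepted one.
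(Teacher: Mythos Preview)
Your proposal is correct and follows essentially the same approach as the paper: both arguments isolate the sufficient condition $\t_k\l \d/(L\sqrt\b)$ for the stopping test, deduce termination in (i), obtain the uniform lower bound $\mu\d/(L\sqrt\b)$ in (ii) via the ``last rejected trial'' observation, and derive the golden-ratio bound in (iii) from the recursion $\th_k\l\sqrt{1+\th_{k-1}}$. The only cosmetic differences are that the paper handles the corner case in (ii) by a ``without loss of generality'' reduction rather than your $\min\{\t_0,\mu\bar\t\}$, and it proves (iii) by minimal-counterexample contradiction rather than your direct monotone-map induction.
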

\begin{proof}
    (i) In each iteration of the linesearch $\t_k$ is multiplied by
    factor $\mu<1$. Since, \eqref{stop_crit} is satisfied for any
    $\t_k \leq\frac{\d}{\sqrt{\b}L}$, the inner loop can not run
    indefinitely.

    (ii) Without
    loss of generality, assume that $\t_0>\frac{\d \mu}{\sqrt \b L}$. Our goal is to show that
    from $\t_{k-1}>\frac{\d \mu}{\sqrt \b L}$ follows $\t_k > \frac{\d
    \mu}{\sqrt \b L}$.  Suppose that $\t_k = \t_{k-1}\sqrt{1+\th_{k-1}}\mu^i$
    for some $i\in \Z^+$. If $i=0$ then $\t_k>\t_{k-1}>\frac{\d
    \mu}{\sqrt \b L}$. If $i>0$
    then $\t_{k}'=\t_{k-1}\sqrt{1+\th_{k-1}}\mu^{i-1}$ does not
    satisfy \eqref{stop_crit}. Thus, $\t_k'>\frac{\d}{\sqrt \b L}$ and
    hence, $\t_k>\frac{\d \mu}{\sqrt \b L}$.

    (iii) From $\t_k\leq \t_{k-1}\sqrt{1+\th_{k-1}}$ and $\th_k =
    \frac{\t_k}{\t_{k-1}}$ it follows that
    $\th_k \leq \sqrt{1+\th_{k-1}}$. From this it can be easily
    concluded that $\th_k \leq \frac{\sqrt{5}+1}{2}$ for all $k\in
    \Z^+$. In fact, assume the contrary, and let $r$ be the smallest
    number such that $\th_{r}> \frac{\sqrt 5 +1}{2}$. Since $\th_0 =
    1$, we have $r\geq 1$, and hence $\th_{r-1}\geq \th_r^2 - 1 >
    \frac{\sqrt 5 +1}{2}$. This yields a contradiction.
\end{proof}

One might be interested in how many iterations the linesearch needs in
order to terminate. Of course, we can give only a priori upper
bounds. Assume that $(\t_k)$ is bounded from above by
$\t_{max}$. Consider in step~2 of \Cref{alg:A1} two opposite ways of choosing step size
$\t_k$: maximally increase it, that is, always set $\t_k = \t_{k-1}\sqrt{1+\th_{k-1}}$ or never increase it and set $\t_k=\t_{k-1}$. If the former case holds, then
after $i$ iterations of the linesearch we have $\t_k \leq
\t_{k-1}\sqrt{1+\th_{k-1}} \mu^{i-1}\leq \t_{max} \frac{\sqrt 5 +1}{2}
\mu^{i-1}$, where we have used the bound from \Cref{well-defined} (iii). Hence, if $i-1\geq \log_{\mu}\frac{2\delta }{\b L(1+\sqrt
5)\t_{max}}$, then $\t_k\leq \frac{\d}{\b L}$ and the linesearch
procedure must terminate. Similarly, if the latter case holds, then
after at most $1+ \log_{\mu}\frac{\delta }{\b L\t_{max}}$ iterations the
linesearch stops. However, because in this case $(\t_k)$ is
nonincreasing, this number is the upper bound for the total
number of the linesearch iterations.

The following is our main convergence result.
\begin{theorem} \label{th:main} Let $(x^k, y^k)$ be a sequence
    generated by PDAL. Then it is a bounded sequence in $X\times Y$ and all its
    cluster points are solutions of \eqref{saddle}.  Moreover, if
    $g|_{\dom g}$ is continuous and $(\t_k)$ is bounded from above
    then the whole sequence $(x^k,y^k)$ converges to a solution of
    \eqref{saddle}.
\end{theorem}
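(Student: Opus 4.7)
The plan is to establish a Fejér-type inequality against an arbitrary saddle point $(\hx,\hy)$, then bootstrap to boundedness, summability of consecutive differences, and identification of cluster points as saddle points; for the whole-sequence statement, the same inequality is reapplied with $(\hx,\hy)$ replaced by a specific cluster point.

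First, I would apply the characteristic property \eqref{prox_charact} to both proximal updates: to $x^k = \prox_{\t_{k-1}g}(x^{k-1}-\t_{k-1}K^*y^k)$ with test point $\hx$ and to $y^{k+1} = \prox_{\b\t_k f^*}(y^k+\b\t_k K\x^k)$ with test point $\hy$. Dividing the primal inequality by $\t_{k-1}$, the dual by $\b\t_k$, and summing, the functional pieces combine with the bilinear terms to give $\mathcal G_{\hx,\hy}(x^k,y^{k+1})\geq 0$, while the remaining quantities can be rewritten via the cosine rule \eqref{eq:cosine} as a telescoping difference of squared norms, plus the cross term
\[
\lr{K(\x^k-x^k),\ y^{k+1}-\hy}=\th_k\lr{K(x^k-x^{k-1}),\ y^{k+1}-\hy}.
\]
I would split this as $\th_k\lr{K(x^k-x^{k-1}),\ y^{k+1}-y^k}+\th_k\lr{K(x^k-x^{k-1}),\ y^k-\hy}$. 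For the first piece, Cauchy--Schwarz combined with the linesearch stopping criterion \eqref{stop_crit} produces a bound of the form $\tfrac{\th_k}{2\t_{k-1}}\n{x^k-x^{k-1}}^2+\tfrac{\d^2}{2\b\t_k}\n{y^{k+1}-y^k}^2$; the second piece naturally pairs with the analogous term produced at the previous iteration, which is what forces the presence of a $\n{x^k-x^{k-1}}^2$ entry in the Lyapunov functional.

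Careful bookkeeping then yields a recursion
\[
\Phi_{k+1}+\alpha_k\n{x^{k+1}-x^k}^2+\c_k\n{y^{k+1}-y^k}^2\leq \Phi_k,
\]
where the natural choice is $\Phi_k = \tfrac{1}{\t_{k-1}}\n{x^{k-1}-\hx}^2+\tfrac{1}{\b\t_{k-1}}\n{y^k-\hy}^2+\tfrac{\th_k}{\t_{k-1}}\n{x^k-x^{k-1}}^2$. The seemingly ad hoc trial-step restriction $\t_k\leq\t_{k-1}\sqrt{1+\th_{k-1}}$ in step 2 of PDAL is precisely what makes the $\n{x^k-x^{k-1}}^2$ residual absorbable into $\Phi_{k+1}$. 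By \Cref{well-defined}(ii)-(iii) the coefficients $\alpha_k,\c_k$ stay bounded away from $0$, so $\Phi_k$ is nonincreasing, $(x^k,y^k)$ is bounded, and $\sum_k\n{x^{k+1}-x^k}^2,\sum_k\n{y^{k+1}-y^k}^2<\infty$. In particular, consecutive differences vanish.

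Next, for any subsequential limit $(x^{k_j},y^{k_j})\to(x^*,y^*)$, vanishing differences together with boundedness of $\th_k$ imply $\x^{k_j}\to x^*$ and $y^{k_j+1}\to y^*$; passing to the limit in the two prox variational inequalities and invoking lower semicontinuity of $g$ and $f^*$ identifies $(x^*,y^*)$ as a saddle point. For the full-sequence statement, I would re-run the Fejér analysis with $(\hx,\hy)=(x^*,y^*)$: when $(\t_k)$ is additionally bounded above, $\Phi_k$ is two-sided equivalent to $\n{x^k-x^*}^2+\n{y^k-y^*}^2$, and monotonicity of $\Phi_k$ combined with subsequential convergence to $(x^*,y^*)$ forces convergence of the whole sequence. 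The continuity of $g|_{\dom g}$ enters here to ensure $g(x^{k_j})\to g(x^*)$ (rather than merely $\liminf\geq g(x^*)$), so that no semicontinuity gap obstructs the Fejér identification.

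The main obstacle will be the algebraic bookkeeping that produces $\Phi_k$: the variable steps $\t_k$, the variable extrapolation $\th_k$, and the extrapolated iterate $\x^k$ interact nontrivially in the cross term, and one must verify that the step condition $\t_k\leq\t_{k-1}\sqrt{1+\th_{k-1}}$ --- which is equivalent to $\tfrac{\th_k^2}{\t_{k-1}}\leq \tfrac{1}{\t_{k-1}}\cdot(1+\th_{k-1})^{-1}\cdot\th_k^{-1}\cdot\ldots$ --- exactly balances the residual $\n{x^k-x^{k-1}}^2$ produced in the current step with the one allowed inside $\Phi_{k+1}$. A secondary point requiring care is that the weights $\tfrac{1}{\t_{k-1}}$ inside $\Phi_k$ are themselves changing, so the conclusion of boundedness from monotonicity of $\Phi_k$ and then the equivalence $\Phi_k\asymp\n{\cdot-(\hx,\hy)}^2$ both genuinely use the uniform bounds on $\t_k$ from \Cref{well-defined}(ii) and from the extra hypothesis of the theorem.
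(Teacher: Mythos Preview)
Your cross-term bookkeeping is where the plan breaks down. If you apply \eqref{prox_charact} to $x^k$ at $\hx$ and to $y^{k+1}$ at $\hy$, the bilinear residue after forming $\mathcal G_{\hx,\hy}(x^k,y^{k+1})$ is not $\lr{K(\x^k-x^k),y^{k+1}-\hy}$ but rather
\[
-\lr{K(x^k-\hx),\,y^k-\hy}\;+\;\lr{K(\x^k-\hx),\,y^{k+1}-\hy},
\]
because the primal prox carries $y^k$, not $y^{k+1}$. Expanding $\x^k=(1+\th_k)x^k-\th_k x^{k-1}$ and attempting to telescope leaves a term proportional to $(1-\th_k)\lr{K(x^k-x^{k-1}),\hy}$ (equivalently, a piece $\lr{K(x^k-\hx),y^{k+1}-y^k}$ that the stopping criterion cannot absorb, since $\n{x^k-\hx}$ does not vanish). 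The Chambolle--Pock style Lyapunov with a $\n{x^k-x^{k-1}}^2$ entry works only when $\th_k\equiv 1$; with variable $\th_k=\t_k/\t_{k-1}$ the ``pairing with the previous iteration'' you rely on fails to close up.

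The paper avoids this altogether by a different device: besides the prox inequality for $x^{k+1}$ at $\hx$, it uses the prox inequality for $x^k$ at the two points $x^{k+1}$ and $x^{k-1}$, combining them with weights $\th_k$ and $\th_k^2$ to obtain
\[
\lr{\x^k-x^k+\t_k K^*y^k,\,x^{k+1}-\x^k}\;\ge\;\t_k\bigl((1+\th_k)g(x^k)-g(x^{k+1})-\th_k g(x^{k-1})\bigr).
\]
Adding this to the other prox inequalities and the trivial identity \eqref{ident2} leaves a \emph{single} cross term $\t_k\lr{K^*y^{k+1}-K^*y^k,\,\x^k-x^{k+1}}$, which the stopping criterion \eqref{stop_crit} controls directly; no bilinear telescoping is needed. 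The price is that the function-value telescoping sits on the right-hand side, so the correct Lyapunov is
\[
\tfrac12\n{x^k-\hx}^2+\tfrac{1}{2\b}\n{y^k-\hy}^2+\t_{k-1}(1+\th_{k-1})\,P_{\hx,\hy}(x^{k-1}),
\]
with no $1/\t_{k-1}$ weights and no $\n{x^k-x^{k-1}}^2$ entry; the step restriction $\t_k\le\t_{k-1}\sqrt{1+\th_{k-1}}$ is exactly $\th_k\t_k\le(1+\th_{k-1})\t_{k-1}$, which makes the $P$-terms telescope. This also explains the role of the continuity hypothesis: it is not needed to identify cluster points (l.s.c.\ suffices there) but to force $P_{x^*,y^*}(x^{k_j})\to 0$ along the convergent subsequence, so that the monotone Lyapunov, which carries a $P$-term, is actually driven to zero.
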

The condition of $g|_{\dom g}$ to be continuous is not restrictive: it
holds for any $g$ with open $\dom g$ (this
includes all finite-valued functions) or for an indicator $\d_C$ of any
closed convex set $C$. Also it holds for any separable convex l.s.c. function
(Corollary 9.15, \cite{baucomb}). The boundedness of $(\t_k)$ from above is rather
theoretical: clearly we can easily bound it in PDAL.

\begin{proof}
    % Fix any $(x,y)\in X\times Y$.
    Let $(\hx,\hy)$ be any saddle point of \eqref{saddle}.
    From \eqref{prox_charact} it follows that
    \begin{align}
   & \lr{x^{k+1} - x^k + \t_k K^*y^{k+1}, \hx-x^{k+1}}  \g \t_k (g(x^{k+1})-g(\hx)) \label{xn+2}\\
   & \bigl\langle \frac{1}{\b} (y^{k+1} - y^k) - \t_k K\x^k, \hy-y^{k+1}\bigr\rangle \g \t_k (f^*(y^{k+1})-f^*(\hy)). \label{yn+2}
\end{align}
Since $x^k = \prox_{\t_{k-1} g}(x^{k-1}-\t_{k-1} K^*y^k)$, we have again by \eqref{prox_charact}
that for all $x \in X$
$$\lr{x^{k} - x^{k-1} + \t_{k-1} K^*y^k , x-x^{k}} \g \t_{k-1} (g(x^{k})-g(x)).$$
After substitution in the last inequality $x = x^{k+1}$ and $x = x^{k-1}$ we get
\begin{align}
    &\lr{x^{k} - x^{k-1} + \t_{k-1} K^*y^k , x^{k+1}-x^{k}} \g \t_{k-1} (g(x^{k})-g(x^{k+1})), \label{xn_12}   \\
    &\lr{x^{k} - x^{k-1} + \t_{k-1} K^*y^k , x^{k-1}-x^{k}} \g \t_{k-1} (g(x^{k})-g(x^{k-1})).\label{xn_22}
\end{align}
Adding \eqref{xn_12}, multiplied by $\th_k = \frac{\t_k}{\t_{k-1}}$, and \eqref{xn_22}, multiplied by $\th_k^2$, we obtain
\begin{equation}\label{xn_32}
\lr{\x^{k} - x^{k} + \t_k K^*y^k, x^{k+1}-\x^{k}} \g \t_k ((1+\th_k)g(x^{k})-g(x^{k+1})-\th_k g(x^{k-1})),
\end{equation}
where we have used that $\x^k = x^k + \th_k (x^k - x^{k-1})$.

Consider the following identity:
\begin{equation}
    \label{ident2}
    \t_k\lr{K^*y^{k+1} - K^*\hy, \x^k - \hx}  - \t_k \lr{K\x^k- K\hx, y^{k+1} - \hy} = 0.
\end{equation}
Summing \eqref{xn+2}, \eqref{yn+2}, \eqref{xn_32}, and \eqref{ident2}, we get
\begin{multline}\label{ls:adding}
    \lr{x^{k+1} - x^k, \hx-x^{k+1}} +  \frac 1 \b \lr{y^{k+1} - y^k
    , \hy-y^{k+1}} +  \lr{\x^{k} - x^{k}, x^{k+1}-\x^{k}}  \\ +
    \t_k \lr{K^*y^{k+1}- K^* y^k, \x^k - x^{k+1}} - \t_k \lr{K^*y,\x^k - \hx} + \t_k \lr{K\hx, y^{k+1} -
    y} \\ \g \t_k \bigl(f^*(y^{k+1}) - f^*(\hy)  + (1+\th_k)g(x^k) -\th_k g(x^{k-1}) -g(\hx)\bigr).
\end{multline}
Using that
\begin{equation}\label{ls:dual}
    f^*(y^{k+1}) - f^*(\hy) - \lr{K\hx,y^{k+1} - \hy} = D_{\hx,\hy}(y^{k+1})
\end{equation}
and
\begin{align}\label{ls:primal}
    (1 + & \th_k)g(x^k) - \th_k g(x^{k-1}) -g(\hx) +
           \lr{K^*y,\x^k-\hx}\nonumber \\
    & = (1+\th_k)\left(g(x^k)-g(\hx) + \lr{K^*\hy, x^k-\hx}\right)
    -\th_k\left(g(x^{k-1})-g(\hx) + \lr{K^*\hy, x^{k-1}-\hx}\right)
      \nonumber \\
    & = (1+\th_k)P_{\hx,\hy}(x^{k}) - \th_k P_{\hx,\hy}(x^{k-1}),
\end{align}
we can rewrite \eqref{ls:adding} as (we will not henceforth write
the subscripts for $P$ and $D$ unless it is unclear)
\begin{multline}\label{ls:adding2}
     \lr{x^{k+1} - x^k, \hx-x^{k+1}} +  \frac 1 \b \lr{y^{k+1} - y^k
    , \hy-y^{k+1}} + \lr{\x^{k} -
    x^{k}, x^{k+1}-\x^{k}}  \\ +
    \t_k \lr{K^*y^{k+1}- K^* y^k, \x^k - x^{k+1}}   \g \t_k
    ((1+\th_k)P(x^{k}) - \th_k P(x^{k-1}) + D(y^{k+1})).
\end{multline}
Let $\varepsilon_k$ denotes the right-hand side of \eqref{ls:adding2}.
Using the cosine rule for every item in the first line of
\eqref{ls:adding2}, we obtain
\begin{multline}\label{ineq:42}
   \frac{1}{2}(\n{x^k-\hx}^2 - \n{x^{k+1}-\hx}^2 - \n{x^{k+1}-x^k}^2)
   \\ + \frac{1}{2\b} (\n{y^k-\hy}^2 -
    \n{y^{k+1}-\hy}^2 - \n{y^{k+1}-y^k}^2 )\\ + \frac 1 2 ( \n{x^{k+1}-x^k}^2 - \n{\x^k-x^k}^2
    -\n{x^{k+1}-\x^k}^2) \\ +     \t_k \lr{K^*y^{k+1}- K^* y^k, \x^k
    - x^{k+1}} \geq \e_k.
\end{multline}
By \eqref{stop_crit}, Cauchy--Schwarz, and Cauchy's inequalities we get
\begin{align*}
\t_k \lr{K^*y^{k+1}- K^* y^k, \x^k - x^{k+1}} & \leq \frac{\d}{\sqrt{\b}}
\n{x^{k+1}-\x^k}\n{y^{k+1}-y^k}  \\ & \leq \frac 1 2 \n{x^{k+1}-\x^k}^2 +
  \frac{\d^2}{2\b}\n{y^{k+1}-y^k}^2,
\end{align*}
from which we derive that
\begin{multline}\label{ls:for_summing}
   \frac 1 2 (\n{x^k-\hx}^2 - \n{x^{k+1}-\hx}^2)  + \frac{1}{2\b} (\n{y^k-\hy}^2 -
    \n{y^{k+1}-\hy}^2) \\  - \frac 1 2 \n{\x^k-x^k}^2  -
    \frac{1-\d^2}{2\b} \n{y^{k+1}-y^k}^2   \geq \e_k.
\end{multline}
Since $(\hx, \hy)$ is a saddle point,  $D(y^k)\geq 0$ and $P(x^k)\geq
0$ and  hence \eqref{ls:for_summing} yields
\begin{multline}\label{ls:no_D}
   \frac 1 2 (\n{x^k-\hx}^2 - \n{x^{k+1}-\hx}^2)  + \frac{1}{2\b} (\n{y^k-\hy}^2 -
    \n{y^{k+1}-\hy}^2) \\  - \frac 1 2 \n{\x^k-x^k}^2  -
    \frac{1-\d^2}{2\b} \n{y^{k+1}-y^k}^2   \geq \t_k((1+\th_k)P(x^k) -
    \th_k P(x^{k-1}))
\end{multline}
or, taking into account $\th_k\t_k \leq
(1+\th_{k-1})\t_{k-1}$,
\begin{multline}\label{ls:no_D2}
   \frac 1 2 \n{x^{k+1}-\hx}^2  + \frac{1}{2\b} \n{y^{k+1}-\hy}^2 + \t_k(1+\th_k)P(x^k)
   \leq \\  \frac 1 2 \n{x^k-\hx}^2 + \frac{1}{2\b} \n{y^k-\hy}^2+ \t_{k-1}(1+\th_{k-1}) P(x^{k-1}) \\- \frac 1 2 \n{\x^k-x^k}^2  -
    \frac{1-\d^2}{2\b} \n{y^{k+1}-y^k}^2
\end{multline}
From this we deduce that $(x^k)$, $(y^k)$ are bounded sequences and
$\lim_{k\to \infty}\n{\x^k-x^{k}} = 0$,
$\lim_{k\to \infty}\n{y^k-y^{k-1}} = 0$.  Also notice that $$\frac{x^{k+1}-x^k}{\t_k} =
\frac{\x^{k+1}-x^{k+1}}{\t_{k+1}}\to 0 \quad \text{as } k\to \infty,$$
where the latter holds because $\t_k$ is separated from $0$ by
\Cref{well-defined}. Let  $(x^{k_i}, y^{k_i})$ be a subsequence
that converges to some cluster point $(x^*,y^*)$. Passing to the limit
in
\begin{align*}
  \lr{\frac{1}{\t_{k_i}} (x^{k_i+1} - x^{k_i}) + K^*y^{k_i} ,
  x-x^{k_i+1}} & \geq g(x^{k_i+1})-g(x) \quad \forall x\in X,\\
   \lr{\frac{1}{\b \t_{k_i}} (y^{k_i+1} - y^{k_i}) - K\x^{k_i},
  y-y^{k_i+1}} & \geq f^*(y^{k_i+1})-f^*(y) \quad \forall y \in Y,
\end{align*}
 we obtain that $(x^*,y^*)$ is a saddle point of \eqref{saddle}.

When $g|_{\dom g}$ is continuous, $g(x^{k_i})\to g(x^*)$, and hence,
$P_{x^*, y^*}(x^{k_i})\to 0$. From \eqref{ls:no_D2} it follows that the
sequence $a_k = \frac 1 2 \n{x^{k+1}-x^*}^2  + \frac{1}{2\b}
\n{y^{k+1}-y^*}^2 + \t_k(1+\th_k)P_{x^*, y^*}(x^k)$ is monotone.
Taking into account boundedness of $(\t_k)$ and $(\th_k)$, we obtain $$\lim_{k\to \infty} a_k = \lim_{i\to \infty} a_{k_i}=0,$$
which means that $x^k\to x^*$, $y^k\to y^*$.
\end{proof}

\begin{theorem}[ergodic convergence] \label{th:ergodic}
Let  $(x^k, y^k)$ be a sequence generated by PDAL and $(\hx,\hy)$ be
any saddle point of \eqref{saddle}. Then for the ergodic
 sequence $(X^N, Y^N)$ it holds that
\begin{equation*}
\mathcal{G}_{\hx,\hy}(X^N, Y^N) \leq  \frac{1}{s_N} \left( \frac
    1 2 \n{x^1-\hx}^2  + \frac{1}{2\b} \n{y^1-\hy}^2 + \t_1 \th_1 P_{\hx,\hy}(x^0)\right),
\end{equation*}
where $s_N =   \sum_{k=1}^{N}\t_k$, $X^N = \dfrac{\t_1\th_1 x^0 + \sum_{k=1}^N\t_k \x^k}{\t_1 \th_1 + s_N}$, $Y^N = \dfrac{\sum_{k=1}^N \t_k y^k}{s_N}$.
\end{theorem}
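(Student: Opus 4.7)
The plan is to sum inequality \eqref{ls:for_summing}, already established inside the proof of \Cref{th:main}, from $k=1$ to $N$, and then reinterpret the resulting bound through convexity of the primal and dual gaps.

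Summing \eqref{ls:for_summing} telescopes the squared-norm differences, while the nonpositive contributions $-\tfrac{1}{2}\n{\x^k-x^k}^2$, $-\tfrac{1-\d^2}{2\b}\n{y^{k+1}-y^k}^2$, $-\tfrac{1}{2}\n{x^{N+1}-\hx}^2$, and $-\tfrac{1}{2\b}\n{y^{N+1}-\hy}^2$ can all be discarded. This yields
\begin{equation*}
\tfrac{1}{2}\n{x^1-\hx}^2 + \tfrac{1}{2\b}\n{y^1-\hy}^2 \;\geq\; \sum_{k=1}^N \t_k\bigl[(1+\th_k)P(x^k) - \th_k P(x^{k-1}) + D(y^{k+1})\bigr].
\end{equation*}

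Next I would reorganize the primal contribution. Shifting the index in the second term gives
\begin{equation*}
\sum_{k=1}^N \t_k\bigl[(1+\th_k)P(x^k) - \th_k P(x^{k-1})\bigr] = \t_N(1+\th_N)P(x^N) + \sum_{k=1}^{N-1}\bigl[\t_k(1+\th_k)-\t_{k+1}\th_{k+1}\bigr]P(x^k) - \t_1\th_1 P(x^0).
\end{equation*}
The linesearch rule $\t_k \in [\t_{k-1},\t_{k-1}\sqrt{1+\th_{k-1}}]$, once squared, is precisely the structural inequality $\t_{k+1}\th_{k+1}=\t_{k+1}^2/\t_k \leq \t_k(1+\th_k)$, which ensures that every weight $\t_k(1+\th_k)-\t_{k+1}\th_{k+1}$ is nonnegative. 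A short telescoping calculation further shows that, together with $\t_N(1+\th_N)$, these weights add to $\t_1\th_1+s_N$.

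The pivotal observation is that substituting $\x^k = (1+\th_k)x^k - \th_k x^{k-1}$ into the numerator of $X^N$ produces
\begin{equation*}
(\t_1\th_1 + s_N)\,X^N \;=\; \t_N(1+\th_N)\,x^N + \sum_{k=1}^{N-1}\bigl[\t_k(1+\th_k)-\t_{k+1}\th_{k+1}\bigr]\,x^k,
\end{equation*}
so that $X^N$ is exactly the convex combination of $x^1,\dots,x^N$ carrying the same weights appearing in the telescoped primal sum. Applying convexity of $P$ then gives $(\t_1\th_1+s_N)P(X^N) \leq \sum_{k=1}^N \t_k[(1+\th_k)P(x^k)-\th_k P(x^{k-1})] + \t_1\th_1 P(x^0)$, and using $P(X^N)\geq 0$ one may replace the factor $\t_1\th_1+s_N$ on the left by $s_N$. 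An analogous, simpler use of convexity of $D$ applied to the ergodic dual iterate gives $s_N D(Y^N) \leq \sum_{k=1}^N \t_k D(y^{k+1})$.

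Adding the primal and dual estimates reconstructs precisely the sum on the right of the opening step, so that $s_N \mathcal{G}_{\hx,\hy}(X^N,Y^N) \leq \tfrac{1}{2}\n{x^1-\hx}^2 + \tfrac{1}{2\b}\n{y^1-\hy}^2 + \t_1\th_1 P_{\hx,\hy}(x^0)$, which is the claimed bound after dividing by $s_N$. The main bookkeeping obstacle is verifying that rewriting $X^N$ as a convex combination of $\{x^k\}_{k=1}^N$ yields exactly the weights appearing in the telescoped primal sum; both that matching and the nonnegativity of the weights rest on the single inequality $\t_{k+1}\th_{k+1}\leq \t_k(1+\th_k)$ forced by the linesearch.
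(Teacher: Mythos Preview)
Your proposal is correct and follows essentially the same route as the paper: sum \eqref{ls:for_summing} over $k=1,\dots,N$, telescope and discard the nonpositive terms, reorganize the primal sum via the key inequality $\t_{k+1}\th_{k+1}\le\t_k(1+\th_k)$, and then apply convexity of $P$ and $D$ to recognize $X^N$ and $Y^N$. The only wrinkle---that the dual sum naturally produces $\sum_k\t_kD(y^{k+1})$ whereas the stated $Y^N$ is built from $y^k$---is an index slip already present in the paper (compare the definition of $Y^N$ in \Cref{sec:acceler}, which uses $y^{k+1}$), not a defect of your argument.
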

\begin{proof}
Summing up~\eqref{ls:for_summing} from $k=1$ to $N$, we get
\begin{equation}\label{ls:after_summing}
   \frac 1 2 (\n{x^1-\hx}^2 - \n{x^{N+1}-\hx}^2)  + \frac{1}{2\b} (\n{y^1-\hy}^2 -
    \n{y^{N+1}-\hy}^2)  \geq \sum_{k=1}^N \e_k
\end{equation}
The right-hand side in \eqref{ls:after_summing} can be expressed as
\begin{align*}
\sum_{k=1}^N \varepsilon_k  =  \t_N(1+\th_N) P(x^N) & + \sum_{k=2}^{N} [(1+\th_{k-1})\t_{k-1}-\th_k
\t_k)]P(x^{k-1}) \\ &  - \th_1\t_1 P(x^0) + \sum_{k=1}^N \t_kD(y^{k+1})
\end{align*}
By convexity of $P$,
\begin{align}\label{P_convex}
 \t_N(1+\th_N) P(x^N) + & \sum_{k=2}^{N} [(1+\th_{k-1}) \t_{k-1}-\th_k
\t_k)]P(x^{k-1}) \\ & \geq  (\t_1\th_1+ s_N)
P(\frac{\t_1(1+\th_1) x^1 + \sum_{k=2}^N\t_k \x^k}{\t_1 \th_1 + s_N})
\\ & = (\t_1\th_1+ s_N)
P(\frac{\t_1\th_1 x^0 + \sum_{k=1}^N\t_k \x^k}{\t_1 \th_1 + s_N}) \geq  s_N
P(X^N),
\end{align}
where $s_N =   \sum_{k=1}^{N}\t_k$. Similarly,
\begin{equation}\label{D_convex}
\sum_{k=1}^N \t_k D(y^k) \geq s_N D(\frac{\sum_{k=1}^N\t_k y^k}{s_N}) =
s_N D(Y^N).
\end{equation}
Hence,
\begin{equation*}
    \sum_{k=1}^N\e_k \geq s_N (P(X^N)+D(Y^N)) -  \t_1 \th_1 P(x^0)
\end{equation*}
and we conclude
\begin{equation*}
    \mathcal{G}(X^N, Y^N)  = P(X^N)+D(Y^N)  \leq  \frac{1}{s_N} \left( \frac
    1 2 \n{x^1-\hx}^2  + \frac{1}{2\b} \n{y^1-\hy}^2 + \t_1 \th_1
                                                  P(x^0)\right). \qedhere
\end{equation*}

\end{proof}
\bigskip
\noindent Clearly, we have the same $O(1/N)$ rate of convergence as in
\cite{chambolle2011first,pock:ergodic,goldstein2015adaptive}, though
with the ergodic sequence $(X^N,Y^N)$ defined in a different way.

Analysing the proof of Theorems~\ref{th:main} and~\ref{th:ergodic},
the reader may find out that our proof does not rely on the
proximal interpretation of the PDA~\cite{he-yuan:2012}. An obvious
shortcoming of our approach is that  deriving new
extensions of the proposed method, like inertial or relaxed versions~\cite{pock:ergodic},
still requires some nontrivial efforts. It would be interesting to
obtain a general approach for the proposed method and its possible extensions.

It is well known that in many cases the speed of convergence
of PDA crucially depends on the ratio between primal and dual steps
$\b  = \dfrac{\sigma}{\tau}$. Motivated by this,
paper~\cite{goldstein2013adaptive} proposed an adaptive strategy for how
to choose $\b$ in every iteration. Although it is not the goal of this
work to study the strategies for defining $\b$, we show that the analysis
of PDAL allows us to incorporate such strategies in a very natural way.
\begin{theorem}
     Let $(\b_k)\subset (\b_{min},\b_{max}) $ be a monotone sequence
     with $\b_{min},\b_{max} >0$ and $(x^k,y^k)$ be a sequence generated by PDAL with variable
     $(\b_k)$.     Then the statement of \Cref{th:main} holds.
    %Let $(\b_k)\subset (0, +\infty) $ and  $(x^k,y^k)$ be a sequence generated by PDAL with variable $(\b_k)$. Suppose     that one of the following holds:
     % \begin{enumerate}[label=(\roman*)]
        %     \item  $(\b_k)$ is nondecreasing and bounded from above by $\b_{max}>0$;
        % \item  $(\b_k)$ is decreasing and bounded from below by $\b_{min}>0$;
%        \item $(y^k)$ is bounded and
%        $\sum_{k}\bigl|\frac{1}{\b_k}-\frac{1}{\b_{k-1}}\bigr|<\infty$.
%    \end{enumerate}

 \end{theorem}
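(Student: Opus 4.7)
The plan is to follow the proof of \Cref{th:main} with $\b$ replaced by $\b_k$ at iteration $k$, and then control the drift introduced by the variation of $\b_k$ via a quasi-Fej\'er/Gronwall argument built on the monotonicity of $(\b_k)$. First I would re-run the chain \eqref{xn+2}--\eqref{ls:for_summing} unchanged: at iteration $k$ only the single ratio $\s_k/\t_k=\b_k$ enters the proximal inequality for $y^{k+1}$, so every step passes through verbatim.

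This yields the analogue of \eqref{ls:no_D2}, which I would massage by splitting $\frac{1}{2\b_k}\n{y^k-\hy}^2 = \frac{1}{2\b_{k-1}}\n{y^k-\hy}^2 + D_k$ with $D_k := \bigl(\frac{1}{2\b_k}-\frac{1}{2\b_{k-1}}\bigr)\n{y^k-\hy}^2$. Setting
\begin{equation*}
L_k := \tfrac12\n{x^{k+1}-\hx}^2 + \tfrac{1}{2\b_k}\n{y^{k+1}-\hy}^2 + \t_k(1+\th_k)P(x^k),
\end{equation*}
the key estimate takes the form
\begin{equation*}
L_k + \tfrac12\n{\x^k-x^k}^2 + \tfrac{1-\d^2}{2\b_k}\n{y^{k+1}-y^k}^2 \leq L_{k-1} + D_k.
\end{equation*}

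If $(\b_k)$ is non-decreasing then $D_k\leq 0$ and $(L_k)$ is already a non-increasing Lyapunov sequence, so the rest of the proof of \Cref{th:main} goes through without any change. The only genuinely new situation is non-increasing $(\b_k)$, where $D_k$ can be positive; this is the main technical obstacle. There I would use the a priori bound $\n{y^k-\hy}^2 \leq 2\b_{k-1}L_{k-1}$ to deduce $L_k \leq L_{k-1}\bigl(1+\b_{max}\bigl|\tfrac{1}{\b_k}-\tfrac{1}{\b_{k-1}}\bigr|\bigr)$. Because $(\b_k)$ is monotone the absolute sum telescopes honestly, giving $\sum_{k\geq 1}\bigl|\tfrac{1}{\b_k}-\tfrac{1}{\b_{k-1}}\bigr| \leq 1/\b_{min}$, so a Gronwall product estimate combined with $1+t\leq e^t$ produces the uniform bound $L_k \leq L_0\exp(\b_{max}/\b_{min})$.

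Once $(L_k)$ is uniformly bounded, $(y^k)$ is bounded and hence $\sum_k|D_k|<\infty$; summing the key estimate then yields $\sum_k\n{\x^k-x^k}^2<\infty$ and $\sum_k\n{y^{k+1}-y^k}^2<\infty$ (using $\b_k<\b_{max}$), while the quasi-Fej\'er relation $L_k\leq L_{k-1}+|D_k|$ ensures that $(L_k)$ converges for every saddle point $(\hx,\hy)$. From this point the cluster-point analysis and, under continuity of $g|_{\dom g}$, the final Opial-type convergence argument of \Cref{th:main} transfer verbatim, the only modification being that one invokes existence of $\lim_k L_k$ (instead of monotonicity) before identifying the limit as $0$ along the convergent subsequence; combined with $\b_k\in(\b_{min},\b_{max})$ and the lower bound on $\t_k$ from \Cref{well-defined} this yields $x^k\to x^*$ and $y^k\to y^*$.
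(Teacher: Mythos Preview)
Your argument is correct, and for the nondecreasing case it is exactly the paper's: the paper also just absorbs $\tfrac{1}{2\b_k}\n{y^k-\hy}^2\le\tfrac{1}{2\b_{k-1}}\n{y^k-\hy}^2$ to make $L_k$ monotone.

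For the decreasing case, however, the paper takes a much shorter route. Instead of introducing the defect $D_k$ and controlling it by a Gronwall/quasi-Fej\'er argument, it simply multiplies the version of \eqref{ls:no_D} with $\b_k$ in place of $\b$ through by $\b_k$. This swaps the Lyapunov function to
\[
\tilde L_k \;=\; \tfrac{\b_k}{2}\n{x^{k+1}-\hx}^2+\tfrac{1}{2}\n{y^{k+1}-\hy}^2+\b_k\t_k(1+\th_k)P(x^k),
\]
and because $\b_k<\b_{k-1}$ one has $\th_k\b_k\t_k\le(1+\th_{k-1})\b_{k-1}\t_{k-1}$, so $\tilde L_k$ is genuinely nonincreasing---no error term, no summability argument needed. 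Boundedness of $(\b_k)$ away from $0$ then does the rest exactly as in \Cref{th:main}.

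What each approach buys: the paper's rescaling trick is cleaner and yields a monotone (not just quasi-monotone) energy, but it is tailored to monotone $(\b_k)$. Your quasi-Fej\'er route is heavier but it is the right tool once monotonicity of $(\b_k)$ is relaxed to bounded variation $\sum_k\bigl|\tfrac{1}{\b_k}-\tfrac{1}{\b_{k-1}}\bigr|<\infty$; this is precisely the extension the paper sketches informally after the theorem, so your proof already covers that generalization almost verbatim.
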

\begin{proof}
    Let $(\b_k)$ be nondecreasing. Then using $\frac{1}{\b_k}\leq
    \frac{1}{\b_{k-1}}$, we get from \eqref{ls:no_D2} that
\begin{multline}\label{adaptive:1}
   \frac 1 2 \n{x^{k+1}-\hx}^2  + \frac{1}{2\b_k} \n{y^{k+1}-\hy}^2 + \t_k(1+\th_k)P(x^k)
   \leq \\  \frac 1 2 \n{x^k-\hx}^2 + \frac{1}{2\b_{k-1}} \n{y^k-\hy}^2+ \t_{k-1}(1+\th_{k-1}) P(x^{k-1}) \\- \frac 1 2 \n{\x^k-x^k}^2  -
   \frac{1-\d^2}{2\b_k} \n{y^{k+1}-y^k}^2  .
\end{multline}
Since $(\b_k)$ is bounded from above, the
conclusion in \Cref{th:main} simply follows.

If $(\b_k)$ is decreasing, then the above arguments should be modified.
Consider \eqref{ls:no_D}, multiplied by $\b_k$,
\begin{multline}\label{adaptive:2}
   \frac{\b_k}{2} (\n{x^k-\hx}^2 - \n{x^{k+1}-\hx}^2)  + \frac{1}{2} (\n{y^k-\hy}^2 -
    \n{y^{k+1}-\hy}^2) \\  - \frac{\b_k}{2} \n{\x^k-x^k}^2  -
    \frac{1-\d^2}{2} \n{y^{k+1}-y^k}^2   \geq \b_k\t_k((1+\th_k)P(x^k) -
    \th_k P(x^{k-1})).
\end{multline}
As $\b_k< \b_{k-1}$, we have  $\th_k\b_k\t_k
\leq(1+\th_{k-1})\b_{k-1}\t_{k-1}$, which in turn implies
\begin{multline}\label{adaptive:3}
   \frac{\b_k}{2} \n{x^{k+1}-\hx}^2  + \frac{1}{2} \n{y^{k+1}-\hy}^2 + \t_k\b_k(1+\th_k)P(x^k)
   \leq \\  \frac{\b_{k-1}}{2} \n{x^k-\hx}^2 + \frac{1}{2} \n{y^k-\hy}^2+ \t_{k-1}\b_{k-1}(1+\th_{k-1}) P(x^{k-1}) \\- \frac{\b_k}{2} \n{\x^k-x^k}^2  -
   \frac{1-\d^2}{2} \n{y^{k+1}-y^k}^2  .
\end{multline}
Due to the given properties of $(\b_k)$, the rest is trivial.
\end{proof}
% The case when $(y^k)$ is a bounded sequence is simpler. We only need
% to observe that it still follows from \eqref{ls:no_D2} (with $\b_k$
% instead of $\b$) that $(x^k)$ is bounded and
% $\lim_{k\to \infty}\n{\x^k-x^{k}} = 0$,
% $\lim_{k\to \infty}\n{y^k-y^{k-1}} = 0$. These are the only
% ingredients for which we need to prove convergence.

% Furthermore, we can obtain a similar result when $(x^k)$ is bounded. However,
% this requires some modification of Step 2 in PDAL. Namely, we have to
% ensure that $\t_k =
% \t_{k-1}\sqrt{\frac{\b_{k-1}}{\b_k}(1+\th_{k-1})}$. This yields
% $\th_k\b_k\t_k \leq(1+\th_{k-1})\b_{k-1}\t_{k-1}$, similar to what we
% have used in \eqref{adaptive:2}.
% Most often we know that $(y^k)$ is bounded a priori when $f^*$ is an
% indicator function of some bounded convex set.
It is natural to ask if it is possible to use nonmonotone $(\b_k)$.
To answer this question, we can easily use the strategies
from~\cite{he2000alternating}, where an ADMM with variable steps was proposed. One way is to use any
$\b_k$ during a finite number of iterations and then
switch to monotone $(\b_k)$.  Another way is to  relax the monotonicity
of $(\b_k)$ to the following:
\begin{align*}
\text{there exists } (\rho_k) \subset \R_+ \quad \text{such that} \colon
  \sum_{k}\rho_k & < \infty \quad \text{and}
\\
  \b_k & \leq \b_{k-1}(1+\rho_{k}) \quad \forall k\in \N \quad \text{or } \\
  \b_{k-1} & \leq \b_{k}(1+\rho_{k}) \quad \forall k\in \N.
\end{align*}
We believe that it should be quite straightforward to prove convergence of
PDAL with the latter strategy.

\section{Acceleration}\label{sec:acceler}
It has been shown \cite{chambolle2011first} that in the case when $g$ or
$f^*$ is strongly convex, one can modify the primal-dual algorithm
and derive a better convergence rate. We show that the same holds for
PDAL. The main difference of the accelerated variant APDAL from the
basic PDAL is that now we have to vary $\b$ in every iteration.

Of course due to the symmetry of the primal and dual variables in
\eqref{saddle}, we can always assume that the primal objective is
strongly convex. However, from the computational point of view it
might not be desirable to exchange $g$ and $f^*$ in the PDAL because
of \Cref{rem:linear}. Therefore, we discuss the two cases
separately. Also notice that both accelerated algorithms below
coincide with PDAL when the parameter of strong convexity $\c = 0$.

\subsection{$g$ is strongly convex}
Assume that $g$ is $\c$--strongly convex, i.e.,
$$g(x_2)-g(x_1)\geq \lr{u, x_2-x_1} + \frac \c 2 \n{x_2-x_1}^2 \quad
\forall x_1,x_2\in X, u\in \partial g(x_1).$$
Below we assume that the parameter $\c$ is known. The following
algorithm (APDAL) exploits the strong convexity of $g$:
\begin{algorithm}[!ht]\caption{\textit{Accelerated primal-dual
    algorithm with linesearch: $g$ is strongly convex}}
    \label{alg:A2}
\begin{algorithmic}
   \STATE {\bfseries Initialization:}
Choose $x^0\in X$, $y^1\in Y$, $\mu\in (0,1)$, $\t_0>0$, $\b_0>0$. Set $\th_0 = 1$.
\STATE {\bfseries Main iteration:}
\STATE 1. Compute
\begin{align*}
x^{k} & = \prox_{\t_{k-1} g}(x^{k-1} -
        \t_{k-1} K^*y^{k}) \\
        % \b_k & = \sqrt{\b_{k-1}(\b_{k-1}+\c \t_{k-1})}
       \b_k & = \b_{k-1}(1+\c \t_{k-1})
\end{align*}
\vspace{-3ex}
\STATE 2. Choose any $\t_k \in
[\t_{k-1}\sqrt{\frac{\b_{k-1}}{\b_k}},
\t_{k-1}\sqrt{\frac{\b_{k-1}}{\b_k}(1+\th_{k-1})}]$ and run
% \STATE 2. Choose any $\t_k \in [\t_{k-1}\frac{1}{\sqrt{1+\c \t_{k-1}}},
% \t_{k-1}\sqrt{\frac{1+\th_{k-1}}{1+\c \t_{k-1}}}]$ and run\\

\STATE ~~~ \textbf{Linesearch:}
\STATE ~~~ 2.a. Compute
\vspace{-3ex}
\begin{align*}
  \th_k & = \frac{\t_k}{\t_{k-1}}\\
  \x^k & = x^k + \th_k (x^k - x^{k-1})\\
  y^{k+1} & = \prox_{\b_k \t_k f^*}(y^k + \b_k\t_k K\x^k)
\end{align*}
	\vspace{-3ex}
    \STATE   ~~~2.b. Break linesearch if
    \begin{equation}\label{acc:stop-crit}
        \sqrt{\b_k} \t_k \n{K^*y^{k+1} -K^*y^k} \l \n{y^{k+1} -y^k}
    \end{equation}
	\vspace{-3ex}
\STATE ~~~~~ \quad Otherwise, set $\t_k:= \t_k \mu$ and go to 2.a.
\STATE ~~~\textbf{End of linesearch}

\end{algorithmic}
\end{algorithm}

Note that in contrast to PDAL, we set $\d = 1$, as in any case we will not be able
to prove convergence of $(y^k)$.

Instead of \eqref{xn+2}, now one can use the stronger inequality
\begin{equation}\label{acc:xn}
  \lr{x^{k+1} - x^k + \t_k K^*y^{k+1}, \hx-x^{k+1}}  \g \t_k
  (g(x^{k+1})-g(\hx) + \frac \c 2 \n{x^{k+1}-\hx}^2).
\end{equation}
In turn,  \eqref{acc:xn} yields a stronger version of
\eqref{ls:for_summing} (also with $\b_k$ instead of $\b$):
\begin{multline}\label{acc:for_summing}
   \frac{1}{2} (\n{x^k-\hx}^2 - \n{x^{k+1}-\hx}^2)  + \frac{1}{2\b_k} (\n{y^k-\hy}^2 -
    \n{y^{k+1}-\hy}^2) \\  - \frac 1 2 \n{\x^k-x^k}^2    \geq \e_k + \frac{\c\t_k}{2}\n{x^{k+1}-\hx}^2
\end{multline}
or, alternatively,
\begin{multline}\label{acc:alternative}
   \frac{1}{2} \n{x^k-\hx}^2  + \frac{1}{2\b_k} \n{y^k-\hy}^2 -
   \frac 1 2 \n{\x^k-x^k}^2   \\ \geq \e_k +     \frac{1+\c\t_k}{2}\n{x^{k+1}-\hx}^2
    +  \frac{\b_{k+1}}{\b_k}  \frac{1}{2\b_{k+1}} \n{y^{k+1}-\hy}^2.
\end{multline}
Note that the algorithm provides that $\frac{\b_{k+1}}{\b_k} = 1+\c\t_k$.
For brevity let
\begin{align*}
  A_k  & =    \frac{1}{2} \n{x^k-\hx}^2  + \frac{1}{2\b_k}
         \n{y^k-\hy}^2.
\end{align*}
Then from \eqref{acc:alternative} follows
$$ \frac{\b_{k+1}}{\b_k}A_{k+1}+\e_k \leq A_k$$
or $$\b_{k+1}A_{k+1}+ \b_k\e_k \leq \b_kA_k.$$
Thus, summing the above from $k=1$ to $N$, we get
\begin{equation}
    \b_{N+1}A_{N+1}+\sum_{k=1}^N\b_k\e_k \leq \b_1 A_1.
\end{equation}
Using the convexity in the same way as in \eqref{P_convex},
\eqref{D_convex}, we obtain
\begin{multline}
\sum_{k=1}^N \b_k\e_k  =  \b_N\t_N(1+\th_N) P(x^N)+ \sum_{k=2}^{N} [(1+\th_{k-1})\b_{k-1}\t_{k-1}-\th_k
\b_k\t_k)]P(x^{k-1})  \\ - \th_1\b_1\t_1 P(x^0) + \sum_{k=1}^N \b_k\t_kD(y^{k+1})
 \geq s_N
(P(X^N) + D(Y^N)) - \th_1\s_1 P(x^0),
\end{multline}
where
\begin{align*}
  \s_k & = \b_k\t_k   && \, s_N  =  \sum_{k=1}^{N} \s_k\\
  X^N & = \frac{\s_1 \th_1 x^0 + \sum_{k=1}^N \s_k \x^k}{\s_1\th_1 +
        s_N}  && Y^N = \frac{\sum_{k=1}^N \s_k y^{k+1}}{s_N}
\end{align*}
Hence,
\begin{equation} \label{acc1_last_main}
    \b_{N+1}A_{N+1}+s_N\mathcal{G}(X^N,Y^N) \leq \b_1 A_1 +\th_1\s_1 P(x^0) .
\end{equation}
From this we deduce that the
sequence $(\n{y^{k}-\hy})$ is bounded and
\begin{align*}
  \mathcal{G}(X^N,Y^N) & \leq \frac{1}{s_{N}} (\b_1 A_1 +\th_1\s_1
  P(x^0))\\
  \n{x^{N+1}-\hx}^2  & \leq \frac{1}{\b_{N+1}} (\b_1 A_1 +\th_1\s_1 P(x^0)).
\end{align*}

Our next goal is to derive asymptotics for $\b_N$ and
$s_N$. Obviously, \eqref{acc:stop-crit} holds for any $\t_k$, $\b_k$
such that $\t_k \leq \frac{1}{\sqrt{\b_k}L}$. Since in each iteration
of the linesearch we decrease $\t_k$ by factor of $\mu$, $\t_k$ can not be
less than $\frac{\mu}{\sqrt{\b_k}L}$.
Hence, we have
\begin{equation}\label{b_k}
    \b_{k+1} = \b_k (1+\c \t_k) \geq \b_k (1+\c
    \frac{\mu}{L\sqrt{\b_k}}) = \b_k + \frac{\c\mu}{L}\sqrt{\b_k}  .
\end{equation}
By induction, one can show that there exists $C>0$ such that $\b_k
\geq Ck^2$ for all $k>0$. Then for some constant $C_1>0$ we have
$$\n{x^{N+1}-\hx}^2 \leq \frac{C_1}{(N+1)^2}.$$
From \eqref{b_k} it follows that $\b_{k+1}-\b_k \geq \frac{\c
\mu}{L}\sqrt{C} k$. As $\s_k = \frac{\b_{k+1}-\b_k}{\c} $, we
obtain $\s_k\geq \frac{\mu}{L} \sqrt C k$ and thus $s_N = \sum_{k=1}^N
\s_k = O(N^2)$. This means that for some constant $C_2>0$
$$  \mathcal{G}(X^N,Y^N)  \leq \frac{C_2}{N^2}.$$

We have shown the following result:
\begin{theorem}\label{th:str_conv1}
 Let  $(x^k, y^k)$ be a sequence generated by \Cref{alg:A2}. Then
$\n{x^N-\hx}= O(1/N)$ and $\mathcal G(X^N,Y^N) =
 O(1/N^2)$.
\end{theorem}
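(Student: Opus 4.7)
The plan is to carry the same chain of estimates that proved \Cref{th:main} but with $\b$ replaced everywhere by the iteration-dependent $\b_k$, then to exploit two extra ingredients: the $\c$-strong convexity of $g$, which upgrades the proximal inequality for $x^{k+1}$; and the tuning rule $\b_{k+1} = \b_k(1+\c\t_k)$, which is exactly what is needed to absorb the strong-convexity bonus into the next step of a recurrence. Concretely, I would repeat the derivation of \eqref{ls:adding2}--\eqref{ls:for_summing} using \eqref{acc:xn} in place of \eqref{xn+2}, and with $\d=1$ in the Cauchy inequality (this is why \eqref{acc:stop-crit} drops the factor $\d$). This yields \eqref{acc:for_summing}. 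Using $1+\c\t_k = \b_{k+1}/\b_k$, the extra term $\frac{\c\t_k}{2}\n{x^{k+1}-\hx}^2$ absorbs with $\frac 1 2 \n{x^{k+1}-\hx}^2$, producing the telescoping recurrence $\b_{k+1}A_{k+1} + \b_k\e_k \l \b_k A_k$, where $A_k := \frac 1 2 \n{x^k-\hx}^2 + \frac{1}{2\b_k}\n{y^k-\hy}^2$.

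Next I would sum this inequality from $k=1$ to $N$. The left-hand side telescopes and the weighted sum $\sum_{k=1}^N \b_k\e_k$ can be repackaged by convexity of $P$ and $D$, mirroring \eqref{P_convex} and \eqref{D_convex} but now with weights $\s_k := \b_k\t_k$. This arrives at \eqref{acc1_last_main}. The single estimate \eqref{acc1_last_main} immediately implies $\n{x^{N+1}-\hx}^2 \l \frac{2}{\b_{N+1}}(\b_1 A_1 + \th_1\s_1 P(x^0))$ and $\mathcal G(X^N,Y^N) \l \frac{1}{s_N}(\b_1 A_1 + \th_1\s_1 P(x^0))$, so the theorem reduces to establishing $\b_N = \Omega(N^2)$ and $s_N = \Omega(N^2)$.

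To obtain these growth rates I would exploit the linesearch once more: by \eqref{acc:stop-crit}, the criterion is satisfied whenever $\t_k \l 1/(L\sqrt{\b_k})$, so each inner loop cannot drive $\t_k$ below $\mu/(L\sqrt{\b_k})$. Substituting this lower bound in $\b_{k+1} = \b_k(1+\c\t_k)$ gives $\b_{k+1} \g \b_k + \frac{\c\mu}{L}\sqrt{\b_k}$, and an elementary induction then yields $\b_k \g Ck^2$ for some fixed constant $C>0$ (choose $C$ small enough that $(\c\mu/L)\sqrt{C} \g 3C$ so the base case survives, after which the inductive step is automatic). The same recursion furnishes $\s_k = (\b_{k+1}-\b_k)/\c \g \frac{\mu\sqrt{C}}{L}k$, hence $s_N = \Theta(N^2)$. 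Quadratic growth of $\b_N$ gives the $O(1/N)$ bound on $\n{x^N-\hx}$, and quadratic growth of $s_N$ gives the $O(1/N^2)$ bound on $\mathcal G(X^N,Y^N)$.

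The main obstacle is conceptual rather than technical: recognising that the update $\b_{k+1} = \b_k(1+\c\t_k)$ was chosen precisely so that the strong-convexity bonus $1+\c\t_k$ coincides with the ratio $\b_{k+1}/\b_k$, allowing the inequality to be put in a form that telescopes after multiplication by $\b_k$. Once that is spotted, the quadratic induction on $\b_k$ (driven by the $\sqrt{\b_k}$ term that the linesearch guarantees) is routine.
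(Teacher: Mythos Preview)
Your proposal is correct and follows the paper's own argument essentially step for step: the strong-convexity upgrade \eqref{acc:xn}, the recurrence $\b_{k+1}A_{k+1}+\b_k\e_k\le\b_kA_k$ obtained via $\b_{k+1}/\b_k=1+\c\t_k$, the convexity repackaging with weights $\s_k=\b_k\t_k$, and the quadratic induction on $\b_k$ driven by $\t_k\ge\mu/(L\sqrt{\b_k})$ are all exactly what the paper does. The only point you leave implicit is that the nonnegativity of the coefficients $(1+\th_{k-1})\b_{k-1}\t_{k-1}-\th_k\b_k\t_k$ in the analogue of \eqref{P_convex} is guaranteed by the upper endpoint $\t_k\le\t_{k-1}\sqrt{(\b_{k-1}/\b_k)(1+\th_{k-1})}$ in step~2 of \Cref{alg:A2}, but the paper also does not spell this out.
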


%%%%%%%%%%% Strongly Convex Dual
\subsection{$f^*$ is strongly convex}
The case when $f^*$ is $\c$--strongly convex can be treated in a
similar way.

\begin{algorithm}[!ht]\caption{\textit{Accelerated primal-dual
    algorithm with linesearch: $f^*$ is strongly convex}}
    \label{alg:A3}
\begin{algorithmic}
   \STATE {\bfseries Initialization:}
   Choose $x^0\in X$, $y^1\in Y$,  $\mu\in (0,1)$, $\t_0>0$,  $\b_0>0$. Set $\th_0 = 1$.
\STATE {\bfseries Main iteration:}
\STATE 1. Compute
\begin{align*}
x^{k} & = \prox_{\t_{k-1} g}(x^{k-1} -
        \t_{k-1} K^*y^{k}) \\
        % \b_k & = \sqrt{\b_{k-1}(\b_{k-1}+\c \t_{k-1})}
        \b_k & = \frac{\b_{k-1}}{1+ \c \b_{k-1}\t_{k-1}}
\end{align*}
\vspace{-3ex}
\STATE 2. Choose any $\t_k \in [\t_{k-1}, \t_{k-1}\sqrt{1+\th_{k-1}}]$ and run\\
\STATE ~~~ \textbf{Linesearch:}
\STATE ~~~ 2.a. Compute
\vspace{-3ex}
\begin{align*}
  \th_k & = \frac{\t_k}{\t_{k-1}},\quad \s_k = \b_k \t_k\\
  \x^k & = x^k + \th_k (x^k - x^{k-1})\\
  y^{k+1} & = \prox_{\s_k f^*}(y^k + \s_k K\x^k)
\end{align*}
	\vspace{-3ex}
    \STATE   ~~~2.b. Break linesearch if
    $$\sqrt{\b_k} \t_k \n{K^*y^{k+1} -K^*y^k} \l \n{y^{k+1} -y^k}$$
	\vspace{-3ex}
\STATE ~~~~~ \quad Otherwise, set $\t_k:= \t_k \mu$ and go to 2.a.
\STATE ~~~\textbf{End of linesearch}

\end{algorithmic}
\end{algorithm}

Note that again we set $\d = 1$.

Dividing \eqref{ls:for_summing} over $\t_k$ and taking into account
the strong convexity of $f^*$, which has to be used in~\eqref{yn+2}, we
deduce
\begin{multline}\label{ls:for_summing_accel_dual}
   \frac{1}{2\t_k} (\n{x^k-\hx}^2 - \n{x^{k+1}-\hx}^2)  + \frac{1}{2\s_k} (\n{y^k-\hy}^2 -
    \n{y^{k+1}-\hy}^2) \\  - \frac{1}{2\t_k} \n{\x^k-x^k}^2
     \geq \frac{\e_k}{\t_k} + \frac{\c}{2} \n{y^{k+1}-\hy}^2,
 \end{multline}
 which can be rewritten as
\begin{multline}\label{ls:for_summing_accel_dual_altern}
   \frac{1}{2\t_k} \n{x^k-\hx}^2   + \frac{1}{2\s_k} \n{y^k-\hy}^2  - \frac{1}{2\t_k} \n{\x^k-x^k}^2 \\
     \geq \frac{\e_k}{\t_k}  +   \frac{\t_{k+1}}{\t_k}
     \frac{1}{2\t_{k+1}} \n{x^{k+1}-\hx}^2  + \frac{\s_{k+1}}{\s_k}(1+ \c\s_k) \frac{1}{2\s_{k+1}}\n{y^{k+1}-\hy}^2,
 \end{multline}
Note that by construction of $(\b_k)$ in \Cref{alg:A3}, we have
 $$ \frac{\t_{k+1}}{\t_k} = \frac{\s_{k+1}}{\s_k}(1+ \c\s_k).$$ %= \frac{\b_{k+1}\t_{k+1}}{\b_k\t_k}(1+ \c\s_k).$$
Let $A_k =    \frac{1}{2\t_k} \n{x^k-\hx}^2   + \frac{1}{2\s_k}
\n{y^k-\hy}^2 $. Then \eqref{ls:for_summing_accel_dual_altern} is
equivalent to
$$\frac{\t_{k+1}}{\t_k} A_{k+1} +\frac{\e_k}{\t_k}\leq A_k -
\frac{1}{2\t_k} \n{\x^k-x^k}^2 $$
or
$$\t_{k+1} A_{k+1} +\e_k \leq \t_k A_k -
\frac{1}{2} \n{\x^k-x^k}^2. $$
Finally, summing the above from $k=1$ to $N$, we get
\begin{equation}\label{acc2:last_main}
 \t_{N+1} A_{N+1} + \sum_{k=1}^N \e_k \leq \t_1 A_1 - \frac 1 2
  \sum_{k=1}^N\n{\x^k - x^k}^2
\end{equation}
From this we conclude that the sequence $(x^k)$ is bounded, $\lim_{k\to
\infty}\n{\x^k - x^k} = 0$, and
\begin{align*}
\mathcal{G}(X^N,Y^N)  & \leq \frac{1}{s_{N}} (\t_1 A_1 +\th_1\t_1
  P(x^0)), \\
  \n{y^{N+1}-\hy}^2  &  \leq \frac{\s_{N+1}}{\t_{N+1}}(\t_1 A_1 + \th_1\t_1
                    P(x^0)) = \b_{N+1}(\t_1 A_1 + \th_1\t_1
                    P(x^0)),
\end{align*}
where $X^N$, $Y^N$, $s_N$  are the same as in \Cref{th:ergodic}.

Let us turn to the derivation of the asymptotics of
$(\t_{N})$ and
$(s_N)$. Analogously, we have
that $\t_k \geq \frac{\mu}{\sqrt{\b_k} L}$ and thus
$$\b_{k+1} = \frac{\b_k}{1+\c \b_k \t_k}\leq
\frac{\b_k}{1+\c \frac{\mu}{L} \sqrt{\b_k}}.$$
Again it is not difficult to show by induction that $\b_k \leq \frac{C}{
k^2}$ for some constant $C>0$. In fact, as $\phi(\b) = \frac{\b}{1+\c
\frac{\mu}{L} \sqrt{\b}}$ is increasing, it is sufficient to show that
$$\b_{k+1}\leq \frac{\b_k}{1+\c \frac{\mu}{L} \sqrt{\b_k}}\leq
\frac{\frac{C}{k^2}}{1+\c \frac{\mu}{L} \sqrt{\frac{C}{k^2}} }\leq \frac{C}{(k+1)^2}. $$
The latter inequality is equivalent to
$\sqrt{C} \geq (2+\dfrac 1 k)\dfrac{L}{\c \mu}$, which obviously holds for $C$ large
enough (of course $C$ must also satisfy the induction basis).

The obtained asymptotics for $(\b_k)$ yields $$\t_k \geq
\frac{\mu}{\sqrt{\b_k} L} \geq \frac{\mu k}{\sqrt C L}, $$
from which we deduce $s_N = \sum_{k=1}^N \t_k\geq \frac{\mu}{\sqrt C
L}\sum_{k=1}^N k$. Finally, we obtain the following result:
\begin{theorem}
    Let $(x^k, y^k)$ be a sequence generated by \Cref{alg:A3}. Then $\n{y^N-\hy}= O(1/N)$ and $\mathcal G(X^N,Y^N) = O(1/N^2)$.
\end{theorem}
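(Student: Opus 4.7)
The plan is to mirror the analysis carried out for \Cref{th:str_conv1}, but with the roles of the primal and dual variables swapped: instead of using the strong convexity of $g$ to absorb an extra $\frac{\c\t_k}{2}\n{x^{k+1}-\hx}^2$ into a recursion, I would now use the $\c$--strong convexity of $f^*$ in the analogue of \eqref{yn+2} to obtain an extra $\frac{\c\t_k}{2}\n{y^{k+1}-\hy}^2$. Dividing the resulting analogue of \eqref{ls:for_summing} by $\t_k$ places this term at the same scale as $\frac{1}{2\s_k}\n{y^{k+1}-\hy}^2$, which is exactly~\eqref{ls:for_summing_accel_dual}.

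Next I would cast the inequality in telescoping form. The key algebraic fact is that the update $\b_k=\b_{k-1}/(1+\c\b_{k-1}\t_{k-1})$ in \Cref{alg:A3} is chosen precisely so that $\frac{\t_{k+1}}{\t_k}=\frac{\s_{k+1}}{\s_k}(1+\c\s_k)$. Introducing $A_k=\frac{1}{2\t_k}\n{x^k-\hx}^2+\frac{1}{2\s_k}\n{y^k-\hy}^2$ this converts \eqref{ls:for_summing_accel_dual} into $\t_{k+1}A_{k+1}+\e_k\leq \t_kA_k-\frac12\n{\x^k-x^k}^2$. Summing from $1$ to $N$ gives \eqref{acc2:last_main}, and then the same convexity argument as in \Cref{th:ergodic} applied to $\sum_k\e_k$ (with weights $\t_k$) yields both
\begin{equation*}
\mathcal G(X^N,Y^N)\leq \frac{1}{s_N}\bigl(\t_1A_1+\th_1\t_1 P(x^0)\bigr),\qquad \n{y^{N+1}-\hy}^2\leq \b_{N+1}\bigl(\t_1A_1+\th_1\t_1P(x^0)\bigr).
\end{equation*}

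The main obstacle, and the step I expect to require the most care, is the asymptotic analysis of $(\b_k)$ and $(s_N)$. The linesearch termination criterion~\eqref{acc:stop-crit} together with \Cref{well-defined} forces $\t_k\geq \mu/(\sqrt{\b_k}L)$, so $\b_{k+1}\leq \b_k/(1+\c\tfrac{\mu}{L}\sqrt{\b_k})$. I would then prove by induction that $\b_k\leq C/k^2$ for some sufficiently large $C$: since $\phi(\b)=\b/(1+\c\tfrac{\mu}{L}\sqrt{\b})$ is monotone, the induction step reduces to the algebraic inequality $\sqrt{C}\geq (2+\tfrac1k)\tfrac{L}{\c\mu}$, which holds for $C$ large enough (with $C$ also chosen to cover the base case). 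This bound on $\b_k$ gives $\t_k\geq \mu k/(\sqrt{C}L)$, so $s_N=\sum_{k=1}^N\t_k=\Omega(N^2)$.

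Plugging the two asymptotics back into the displayed bounds above then yields $\n{y^N-\hy}^2=O(\b_N)=O(1/N^2)$ and $\mathcal G(X^N,Y^N)=O(1/s_N)=O(1/N^2)$, which is the claimed result. No additional ingredients beyond \Cref{well-defined}, the strong convexity of $f^*$, and elementary induction are needed; the only delicate point is tuning the constant $C$ in the induction so that the decrease rate of $(\b_k)$ is genuinely quadratic.
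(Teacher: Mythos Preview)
Your proposal is correct and follows the paper's own argument essentially step for step: the strong-convexity term in \eqref{yn+2}, the division by $\t_k$ leading to \eqref{ls:for_summing_accel_dual}, the telescoping via $A_k$ and the identity $\frac{\t_{k+1}}{\t_k}=\frac{\s_{k+1}}{\s_k}(1+\c\s_k)$, the summation to \eqref{acc2:last_main}, and the induction $\b_k\le C/k^2$ based on $\t_k\ge \mu/(\sqrt{\b_k}L)$ are all exactly what the paper does. The only cosmetic difference is that the paper states the lower bound on $\t_k$ directly rather than citing \Cref{well-defined}.
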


\begin{remark}
    In the case when both $g$ and $f^*$ are strongly convex, one can derive a
    new algorithm, combining the ideas of \Cref{alg:A2,alg:A3} (see \cite{chambolle2011first} for more details).
\end{remark}

\section{A more general problem}\label{sec:general_saddle}
In this section we show how to apply the
linesearch procedure to the more general problem
\begin{equation}
    \label{saddle_general}
    \min_{x\in X}\max_{y \in Y}\lr{Kx,y} + g(x) - f^*(y)-h(y),
\end{equation}
where in addition to the previous assumptions, we suppose that $h \colon
Y\to \R$ is a smooth convex function with $L_h$--Lipschitz--continuous gradient
$\nabla h$. Using the idea of \cite{he-yuan:2012}, Condat and V\~u in
\cite{Condat2013,vu2013splitting} proposed an extension of the primal-dual method to solve
\eqref{saddle_general}:
\begin{align*}
%    \label{eq:condat}
  y^{k+1} & = \prox_{\sigma f^*} (y^k + \sigma (K \x^k-\nabla h(y^k) ))\\
  x^{k+1} & = \prox_{\t g}(x^k - \tau K^* y^{k+1})\\
  \x^{k+1} &  = 2x^{k+1} - x^{k}.
\end{align*}
This scheme was proved to converge under the condition $\t \s
\n{K}^2\leq 1- \s L_h$.
Originally the smooth function was added to the primal part and
not to the dual as in \eqref{saddle_general}. For us it is more
convenient to consider precisely that form due to the nonsymmetry of
the proposed linesearch procedure. However, simply exchanging $\max$ and $\min$
in \eqref{saddle_general}, we recover the
form which was considered in \cite{Condat2013}.

In addition to the issues related to the operator norm of $K$ which
motivated us to derive the PDAL, here we
also have to know the Lipschitz constant $L_h$ of $\nabla
h$. This has several drawbacks. First, its computation might be
expensive. Second, our estimation of $L_h$ might be very conservative
and will result in smaller steps. Third, using local
information about $h$ instead of global $L_h$ often allows the use of larger steps.
Therefore, the introduction of a linesearch to the algorithm above is
of great practical interest.

The algorithm below exploits the same idea as  \Cref{alg:A1}
does. However, its stopping criterion is more involved. The interested
reader may identify it as a combination of the stopping criterion
\eqref{stop_crit} and the descent lemma for smooth functions $h$.

\begin{algorithm}[!ht]\caption{\textit{General primal-dual algorithm with linesearch}}
    \label{alg:A4}
\begin{algorithmic}
   \STATE {\bfseries Initialization:}
Choose $x^0\in X$, $y^1\in Y$, $\t_0 > 0$,  $\mu\in (0,1), \d
\in (0,1)$ and $\b_0>0$. Set $\th_0 = 1$.
\STATE {\bfseries Main iteration:}
\STATE 1. Compute  $$x^{k} = \prox_{\t_{k-1} g}(x^{k-1} -
\t_{k-1} K^*y^{k}).$$
\STATE 2. Choose any $\t_k\in[\t_{k-1}, \t_{k-1}\sqrt{1+\th_{k-1}}]$ and run\\
\STATE ~~~ \textbf{Linesearch:}
\STATE ~~~ 2.a. Compute
\vspace{-3ex}
\begin{align*}
  \th_k & = \frac{\t_k}{\t_{k-1}},\quad \s_k = \b \t_k\\
  \x^k & = x^k + \th_k (x^k - x^{k-1})\\
  y^{k+1} & = \prox_{\s_k f^*}(y^k + \s_k (K\x^k-\nabla h(y^k)))
\end{align*}
	\vspace{-3ex}
    \STATE   ~~~2.b. Break linesearch if
\begin{multline}\label{s5:stop_crit}
    \t_k \s_k \n{K^*y^{k+1} -K^*y^k}^2 +
    2\s_k[h(y^{k+1})-h(y^k)-\lr{\nabla h(y^k), y^{k+1}-y^k}] \\ \leq
    \d \n{y^{k+1} -y^k}^2
\end{multline}
\vspace{-3ex}
\STATE ~~~~~ \quad Otherwise, set $\t_k:= \t_k \mu$ and go to 2.a.
\STATE ~~~\textbf{End of linesearch}
\end{algorithmic}
\end{algorithm}

Note that in the case $h\equiv 0$, \Cref{alg:A4} corresponds exactly
to \Cref{alg:A1}.

We briefly sketch the proof of convergence.
        Let $(\hx,\hy)$ be any saddle point of \eqref{saddle}.
  Similarly to \eqref{xn+2}, \eqref{yn+2}, and \eqref{xn_32}, we get
    \begin{align*}
   & \lr{x^{k+1} - x^k + \t_k K^*y^{k+1}, \hx-x^{k+1}}  \g \t_k (g(x^{k+1})-g(\hx))\\
   & \lr{\frac{1}{\b} (y^{k+1} - y^k) - \t_k K\x^k+\t_k  \nabla h(y^k), \hy-y^{k+1}} \g \t_k (f^*(y^{k+1})-f^*(\hy)).\\
& \lr{\th_k(x^{k} - x^{k-1}) + \t_k K^*y^k, x^{k+1}-\x^{k}} \g \t_k
  ((1+\th_k)g(x^{k})-g(x^{k+1})-\th_k g(x^{k-1})).
    \end{align*}
Summation of the three inequalities above and identity \eqref{ident2} yields
\begin{multline}\label{s5:ls:adding}
    \lr{x^{k+1} - x^k, \hx-x^{k+1}} +  \frac 1 \b \lr{y^{k+1} - y^k
    , \hy-y^{k+1}} +  \th_k \lr{x^{k} -
    x^{k-1}, x^{k+1}-\x^{k}}  \\ +
    \t_k \lr{K^*y^{k+1}- K^* y^k, \x^k - x^{k+1}} - \t_k \lr{K^*y,\x^k - \hx} + \t_k \lr{K\hx, y^{k+1} -
    y}    \\
    + \t_k \lr{\nabla h(y^k), \hy-y^{k+1}} \g \t_k \bigl(f^*(y^{k+1}) - f^*(\hy)  + (1+\th_k)g(x^k) -\th_k g(x^{k-1}) -g(\hx)\bigr).
\end{multline}

By convexity of $h$, we have $\t_k( h(\hy) - h(y^k) - \lr{\nabla
h(y^k), \hy - y^k}) \geq 0$. Combining it with inequality
\eqref{s5:stop_crit}, divided over $2\b$, we get
$$
   \frac{\d}{2\b} \n{y^{k+1} -y^k}^2 - \frac{\t_k^2}{2}\n{K^*y^{k+1} -K^*y^k}^2 \geq
    \t_k[h(y^{k+1})-h(\hy)-\lr{\nabla h(y^k), y^{k+1}-\hy}].
    $$
Adding the above inequality to \eqref{s5:ls:adding} gives us
\begin{multline}\label{s5:adding3}
    \lr{x^{k+1} - x^k, \hx-x^{k+1}} +  \frac 1 \b \lr{y^{k+1} - y^k
    , \hy-y^{k+1}} +  \th_k \lr{x^{k} -
    x^{k-1}, x^{k+1}-\x^{k}}  \\ +
    \t_k \lr{K^*y^{k+1}- K^* y^k, \x^k - x^{k+1}} - \t_k \lr{K^*y,\x^k
    - \hx} +  \t_k \lr{K\hx, y^{k+1} -
    y}    \\
     +\frac{\d}{2\b} \n{y^{k+1}-y^k}^2 -\frac{\t_k^2}{2} \n{K^*y^{k+1} -K^*y^k}^2 \\ \geq \t_k \bigl((f^*+h)(y^{k+1}) - (f^*+h)(\hy)  + (1+\th_k)g(x^k) -\th_k g(x^{k-1}) -g(\hx)\bigr).
\end{multline}
Note that for problem \eqref{saddle_general} instead of
\eqref{dual_gap} we have to use
\begin{equation}
    \label{s5:dual_gap}
D_{\hx,\hy}(y):= f^*(y)+h(y) -f^*(\hy)-h(\hy) - \lr{K\hx,y - \hy} \geq 0
\quad \forall y\in Y,
\end{equation}
which is true by definition of $(\hx,\hy)$. Now we can rewrite
\eqref{s5:adding3} as
\begin{multline}\label{s5:before_cosine}
    \lr{x^{k+1} - x^k, \hx-x^{k+1}} +  \frac 1 \b \lr{y^{k+1} - y^k
    , \hy-y^{k+1}} +  \th_k \lr{x^{k} -
    x^{k-1}, x^{k+1}-\x^{k}}  \\ +
    \t_k \lr{K^*y^{k+1}- K^* y^k, \x^k - x^{k+1}}
     +\frac{\d}{2\b} \n{y^{k+1}-y^k}^2 -\frac{\t_k^2}{2} \n{K^*y^{k+1} -K^*y^k}^2
     \\ \geq \t_k \bigl((1+\th_k)P(x^k)-\th_k P(x^{k-1})+D(y^{k+1})\bigr).
\end{multline}
Applying cosine rules for all inner products in the first line in \eqref{s5:before_cosine} and using that $\th_k(x^k-x^{k-1})=\x^k-x^k$,
we obtain
\begin{multline}\label{s5:after_cosine}
   \frac{1}{2}(\n{x^k-\hx}^2 - \n{x^{k+1}-\hx}^2 - \n{x^{k+1}-x^k}^2)
   + \\ \frac{1}{2\b} (\n{y^k-\hy}^2 -
    \n{y^{k+1}-\hy}^2 - \n{y^{k+1}-y^k}^2 )\\ + \frac 1 2 ( \n{x^{k+1}-x^k}^2 - \n{\x^k-x^k}^2
    -\n{x^{k+1}-\x^k}^2) \\ +     \t_k \n{K^*y^{k+1}- K^* y^k}\n{ x^{k+1}-\x^k} +\frac{\d}{2\b} \n{y^{k+1}-y^k}^2 -\frac{\t_k^2}{2} \n{K^*y^{k+1} -K^*y^k}^2\\ \geq \t_k \bigl((1+\th_k)P(x^k)-\th_k P(x^{k-1})+D(y^{k+1})\bigr).
\end{multline}
Finally, applying Cauchy's inequality in \eqref{s5:after_cosine} and
using that $\t_k\th_k\leq \t_{k-1}(1+\th_{k-1})$, we get
\begin{multline}\label{s5:no_D2}
   \frac 1 2 \n{x^{k+1}-\hx}^2  + \frac{1}{2\b} \n{y^{k+1}-\hy}^2 + \t_k(1+\th_k)P(x^k)
   \leq \\  \frac 1 2 \n{x^k-\hx}^2 + \frac{1}{2\b} \n{y^k-\hy}^2+ \t_{k-1}(1+\th_{k-1}) P(x^{k-1}) \\- \frac 1 2 \n{\x^k-x^k}^2  -
    \frac{1-\d}{2\b} \n{y^{k+1}-y^k}^2,
\end{multline}
from which the convergence of $(x^k)$ and $(y^k)$ to a saddle point of
\eqref{saddle_general} can be derived in a similar way as in \Cref{th:main}.

\section{Numerical Experiments}\label{sec:experiment}
This section collects several numerical tests that will illustrate the performance of the proposed methods.
Computations\footnote{Codes can be found on \url{https://github.com/ymalitsky/primal-dual-linesearch}.} were
performed using Python 3 on an Intel Core i3-2350M CPU 2.30GHz running
64-bit Debian Linux 8.7.

For PDAL and APDAL  we initialize the input data as  $\mu = 0.7$, $\d = 0.99$, $\t_0 =
\frac{\sqrt{\min{\{m,n\}}}}{\n{A}_F}$. The latter is easy to compute
and it is an upper bound of $\frac{1}{\n{A}}$. The parameter $\b$ for
PDAL is always taken as $\frac{\s}{\t}$ in PDA with fixed steps $\s$
and $\t$. A trial step $\t_k$ in Step 2 is always chosen as $\t_k = \t_{k-1}\sqrt{1+\th_{k-1}}$.
\subsection{Matrix game}
We are interested in the following min-max matrix game:
\begin{equation}
    \label{eq:minmax}
    \min_{x \in \D_n}\max_{y\in \D_m} \lr{Ax, y},
\end{equation}
where $x\in \R^n$, $y\in \R^m$, $A\in \R^{m\times n}$, and $\Delta_m$,
$\D_n$ denote the standard unit simplices in $\R^m$ and $\R^n$,
respectively.

For this problem we study the performance of PDA, PDAL (\Cref{alg:A1}),  Tseng's FBF
method\cite{tseng00},  and PEGM\cite{malitsky2016proximal}. For  comparison we use the
primal-dual gap $\mathcal G(x,y)$, which can be easily computed for a
feasible pair $(x,y)$ as
$$\mathcal G(x,y) = \max_i (Ax)_i - \min_j (A^*y)_j.$$
Since iterates obtained by Tseng's method may be infeasible,
we used an auxiliary point (see \cite{tseng00}) to compute the
primal-dual gap.

The initial point in all cases was chosen as
$x^0 = \frac{1}{n}(1,\dots, 1)$ and $y^0 = \frac 1 m (1,\dots, 1)$. In
order to compute projection onto the unit simplex we used the
algorithm from~\cite{duchi2008efficient}. For PDA we use
$\tau = \sigma = 1/\n{A} = 1/\sqrt{\la_{\max}(A^*A)}$, which we compute
in advance. The input data for FBF and PEGM are  the same as in
\cite{malitsky2016proximal}.  Note that these methods also use a
linesearch.

We consider four differently generated samples of the matrix $A\in \R^{m \times n}$:
\begin{enumerate}
    \item  $m=n=100$. All entries of $A$
    are generated independently from the uniform distribution in $[-1,1]$.
    \item  $m=n=100$. All entries of $A$
    are generated independently from the the normal distribution $\mathcal{N}(0,1)$.

    \item $m=500$, $n=100$. All entries of $A$
    are generated independently from the normal distribution
    $\mathcal{N}(0,1)$.

    \item $m=1000$, $n=2000$. The matrix $A$ is sparse with $10\%$
    nonzero elements generated independently from the uniform
    distribution in $[0,1]$.
\end{enumerate}
For every case we report the primal-dual gap $\mathcal{G}(x^k,y^k)$ computed in every
iteration vs CPU time. The results are presented in \Cref{fig:matrix_game}.
%Each method was run for $50000$ iterations.

\begin{figure}[ht]
    \begin{subfigure}[b]{0.49\linewidth}
        \centering
        \includegraphics[width=\linewidth]{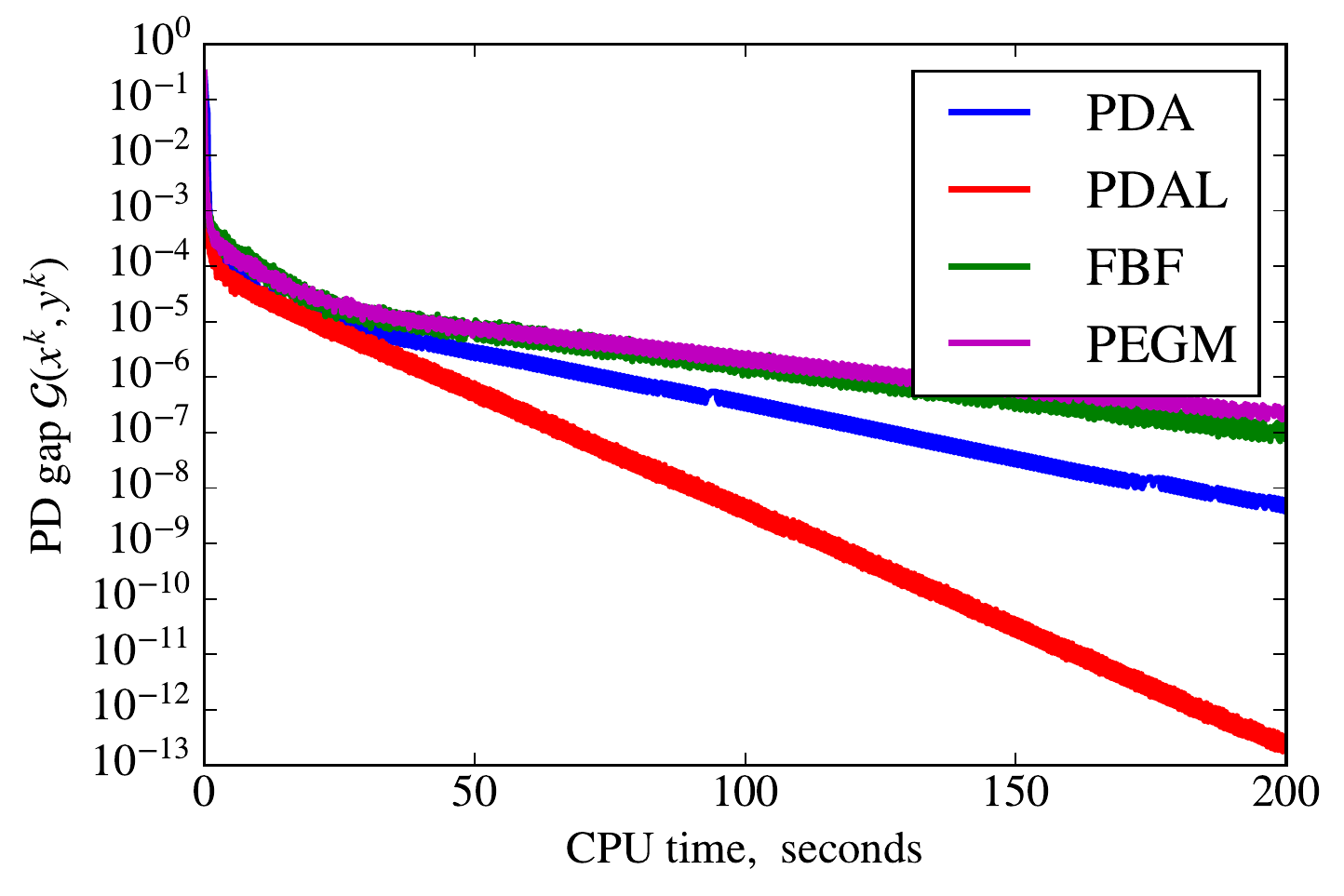}
        \caption{Example 1}
            \vspace{4ex}
    \end{subfigure}
    \begin{subfigure}[b]{0.49\textwidth}
            \centering
        \includegraphics[width=\linewidth]{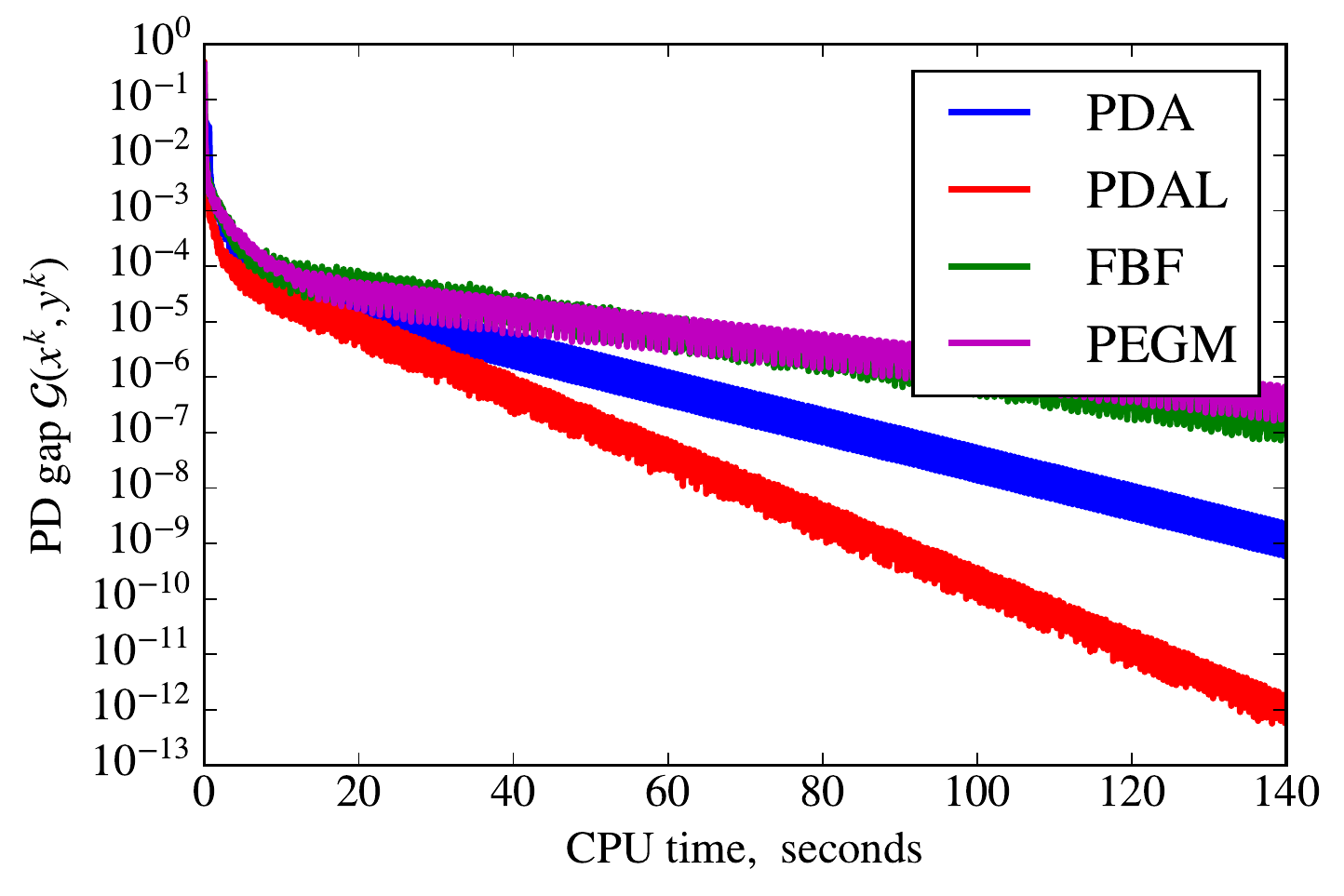}
        \caption{Example 2}
            \vspace{4ex}
    \end{subfigure}

    \begin{subfigure}[b]{0.49\linewidth}
            \centering
        \includegraphics[width=\linewidth]{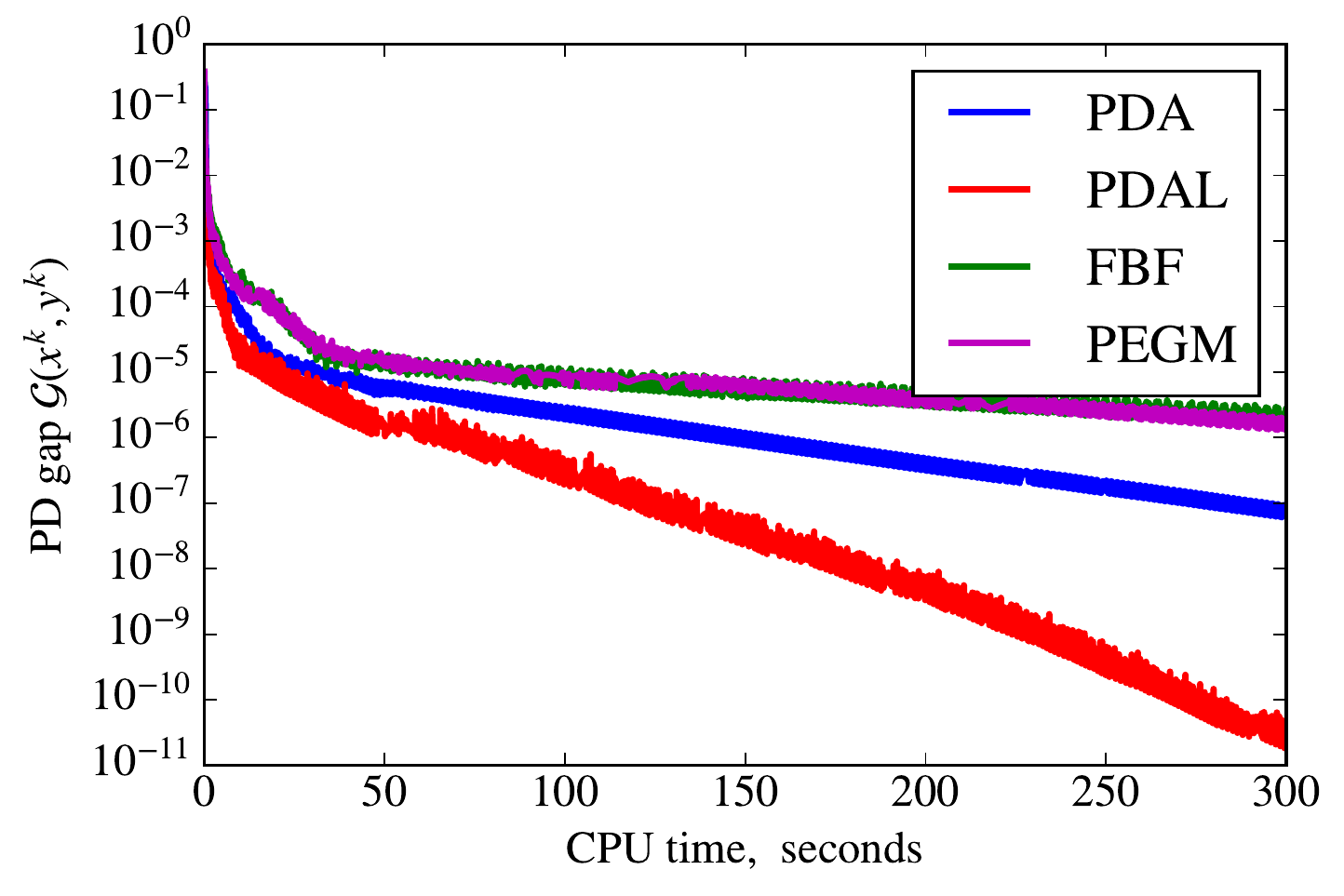}
        \caption{Example 3}
    \end{subfigure}
    \begin{subfigure}[b]{0.49\linewidth}
        \centering
        \includegraphics[width=\linewidth]{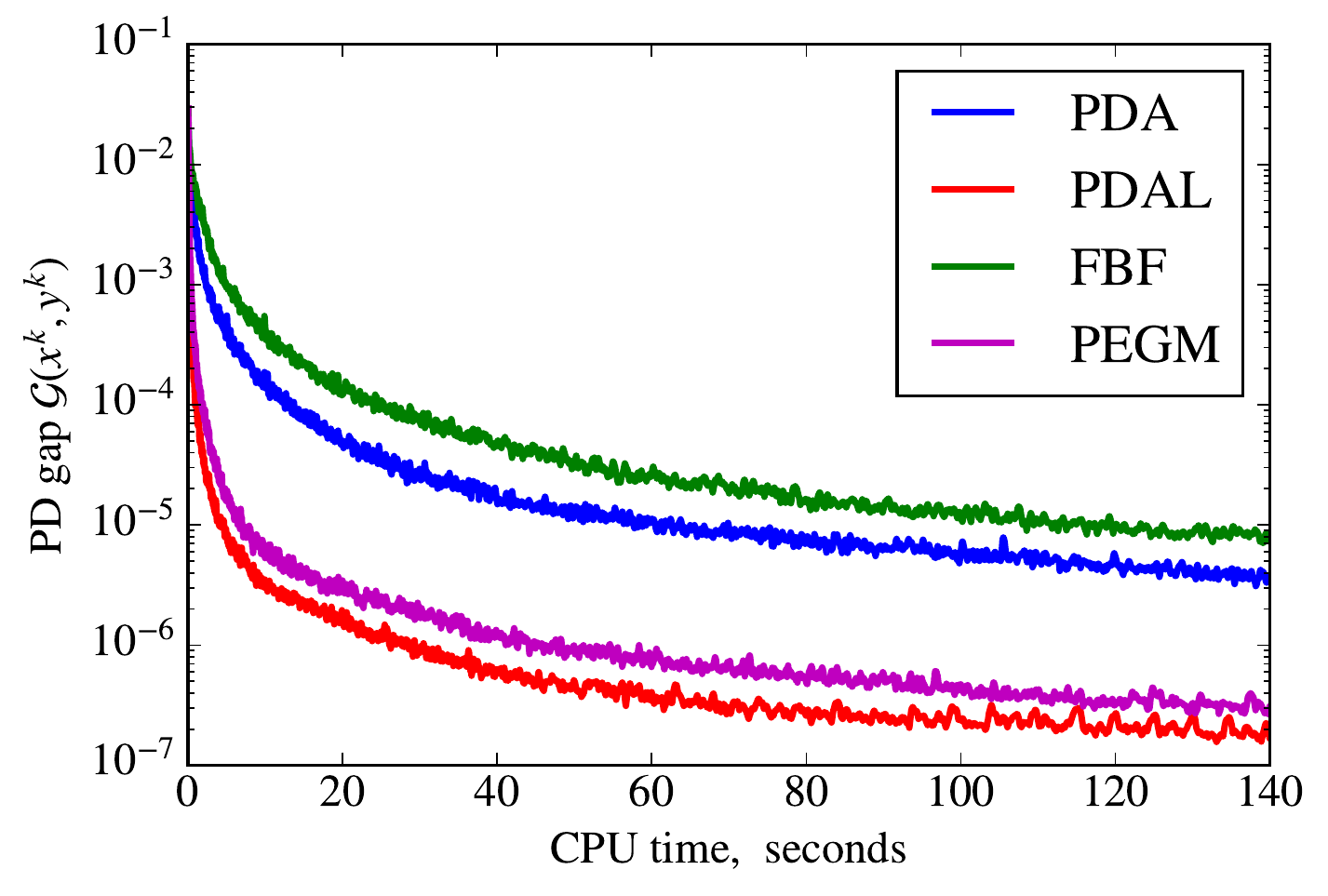}
        \caption{Example 4}
    \end{subfigure}
    \caption{Convergence plots for problem \eqref{eq:minmax}}
    \label{fig:matrix_game}
\end{figure}

\subsection{$l_1$-regularized least squares}
We study the following $l_1$-regularized problem:
\begin{equation}\label{l1-reg}
    \min_{x} \phi(x): = \frac 1 2 \n{Ax-b}^2 + \la \n{x}_1,
\end{equation}
where $A\in \R^{m\times n}$, $x\in \R^n$, $b\in \R^m$.
Let $g(x)=\la \n{x}_1$, $f(p)=\frac 1 2 \n{p-b}^2$. Analogously to
\eqref{least_square} and \eqref{least_square_saddle}, we can rewrite
\eqref{l1-reg} as
$$\min_x \max_y g(x)+\lr{Ax,y}-f^*(y),$$
where $f^*(y) = \frac 1 2 \n{y}^2 + (b,y) = \frac 1 2 \n{y+b}^2
-\frac{1}{2}\n{b}^2$. Clearly, the last term does not have any impact
on the prox-term and we can
conclude that $\prox_{\la f^*}(y) = \frac{y-\la b}{1+\la}$. This means
that the linesearch in \Cref{alg:A1} does not require any
additional matrix-vector multiplication (see \Cref{rem:linear}).

We generate four instances of problem \eqref{l1-reg}, on which  we
compare the performance of PDA, PDAL, APDA (accelerated primal-dual algorithm), APDAL, FISTA~\cite{fista}, and SpaRSA
\cite{wright2009sparse}. The latter method is a variant of the proximal
gradient method with an adaptive linesearch. All methods except PDAL
and SpaRSA require predefined step sizes.
For this we compute in advance $\n{A}=\sqrt{\la_{\max}(A^*A)}$.  For
all instances below we use the following parameters:
\begin{itemize}
    \item \emph{PDA}: $\sigma = \frac{1}{20\n{A}},
    \tau=\frac{20}{\n{A}}$;

    \item \emph{PDAL}: $\b =1/400$;

    \item \emph{APDA, APDAL}: $\b = 1$, $\c = 0.1$;

    \item \emph{FISTA}: $\a = \frac{1}{\n{A}^2}$;

    \item \emph{SpaRSA}: In the first iteration we run a standard
    Armijo linesearch procedure to define $\a_0$ and then we run SpaRSA
    with parameters as described in \cite{wright2009sparse}: $M=5$,
    $\s = 0.01$, $\a_{\max} = 1/\a_{\min} = 10^{30}$.
\end{itemize}

For all cases below we generate some random $w \in \R^n$ in which $s$
random coordinates are drawn from from the uniform distribution in
$[-10,10]$ and the rest are zeros.  Then we generate $\nu\in \R^m$
with entries drawn from $\mathcal{N}(0,0.1)$ and set $b = Aw+\nu$.
The parameter $\la = 0.1$ for all examples and the initial points  are $x^0 = (0, \dots , 0)$, $y^0=Ax^0-b$.

The matrix $A\in \R^{m\times n}$ is constructed in one of the following ways:
\begin{enumerate}
    \item $n=1000$, $m=200$, $s=10$. All entries of $A$ are generated
    independently from $\mathcal{N}(0,1)$.

    \item $n=2000$, $m=1000$, $s=100$. All entries of $A$
    are generated independently from $\mathcal{N}(0,1)$.

    \item $n=5000$, $m=1000$, $s=50$.  First, we generate
    the matrix $B$ with entries from $\mathcal{N}(0,1)$. Then for any
    $p\in (0,1)$ we construct the matrix $A$ by columns $A_j$,
    $j=1,\dots, n$ as follows: $A_1 = \frac{B_1}{\sqrt{1-p^2}}$,
    $A_j = p*A_{j-1}+B_j$. As $p$ increases,  $A$ becomes more
    ill-conditioned (see \cite{agarwal2010fast} where this example was
    considered). In this experiment we take $p=0.5$.
        \item The same as the previous example, but with $p = 0.9$.
    \end{enumerate}
    \Cref{fig:l1} collects the convergence results with
    $\phi(x^k)-\phi_*$ vs CPU time. Since in fact the value $\phi_*$ is
    unknown, we instead run our algorithms for sufficiently many iterations to
    obtain the ground truth solution $x^*$. Then we simply set $\phi_*
    = \phi(x^*)$.
    %Note that all methods require  roughly the same amount of
    %computation per iteration.

\begin{figure}[ht]
    \begin{subfigure}[b]{0.49\linewidth}
        \centering
        \includegraphics[width=\linewidth]{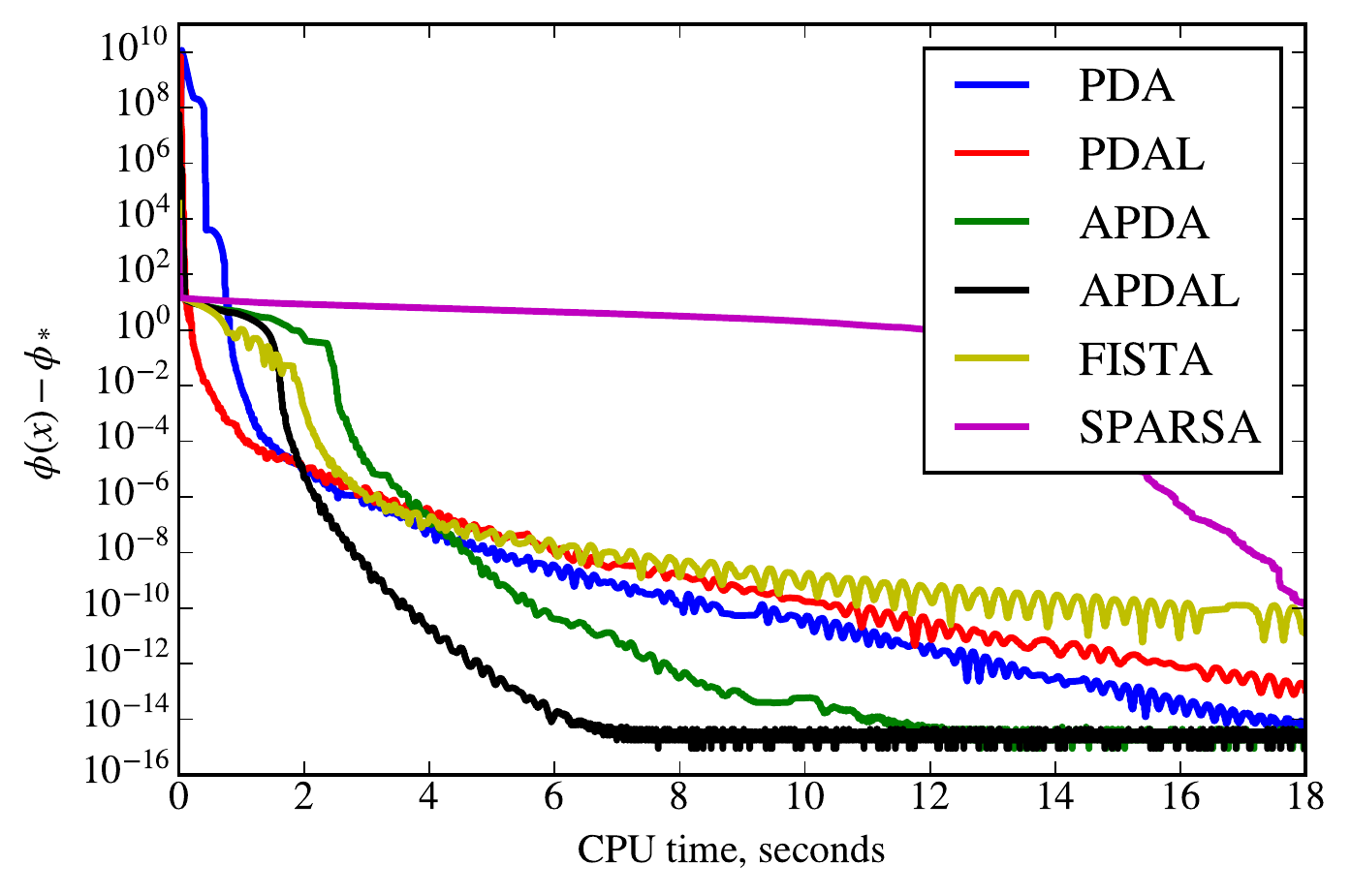}
        \caption{Example 1}
            \vspace{4ex}
    \end{subfigure}
    \begin{subfigure}[b]{0.49\textwidth}
            \centering
        \includegraphics[width=\linewidth]{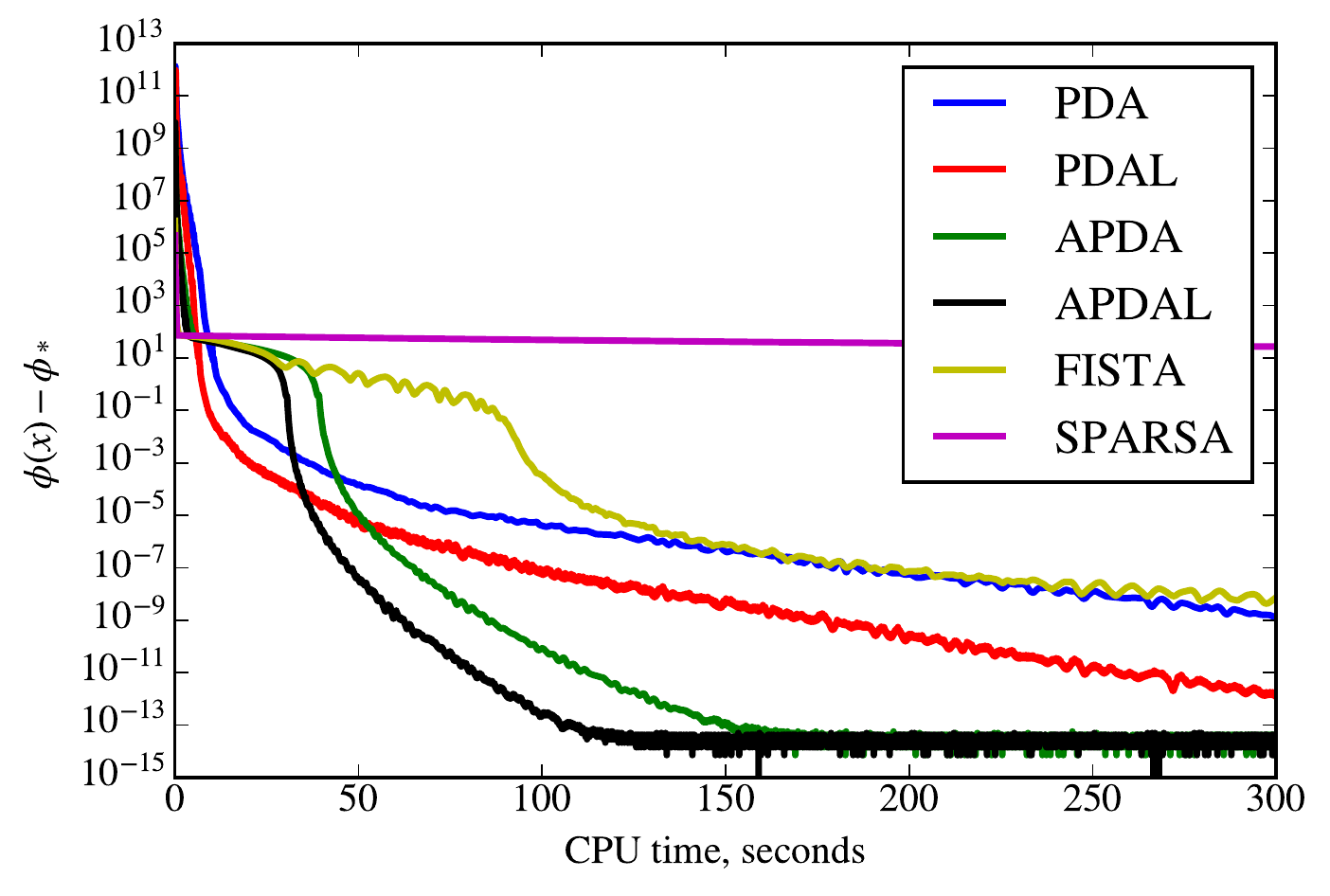}
        \caption{Example 2}
            \vspace{4ex}
    \end{subfigure}

    \begin{subfigure}[b]{0.49\linewidth}
            \centering
        \includegraphics[width=\linewidth]{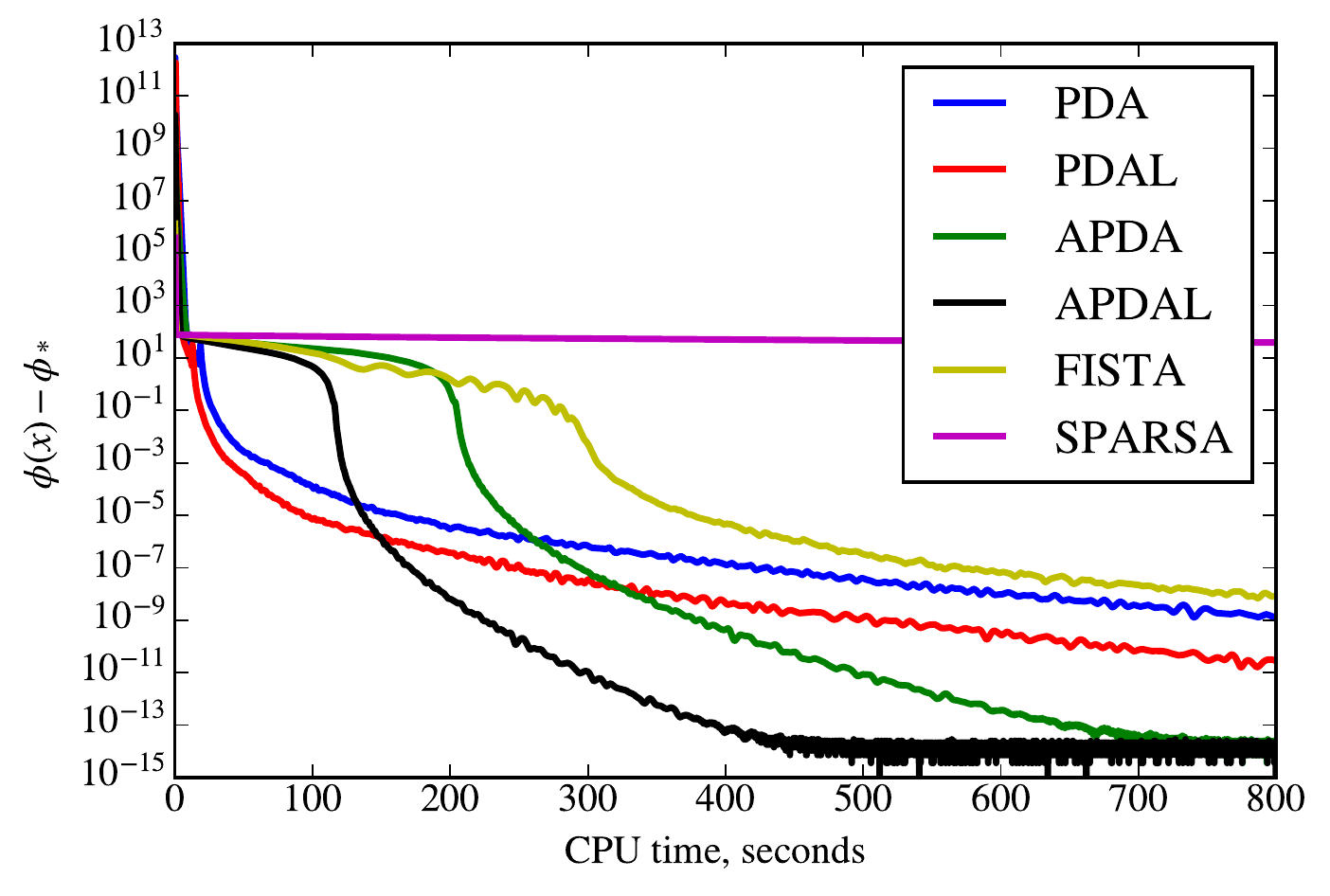}
        \caption{Example 3}
    \end{subfigure}
    \begin{subfigure}[b]{0.49\linewidth}
        \centering
        \includegraphics[width=\linewidth]{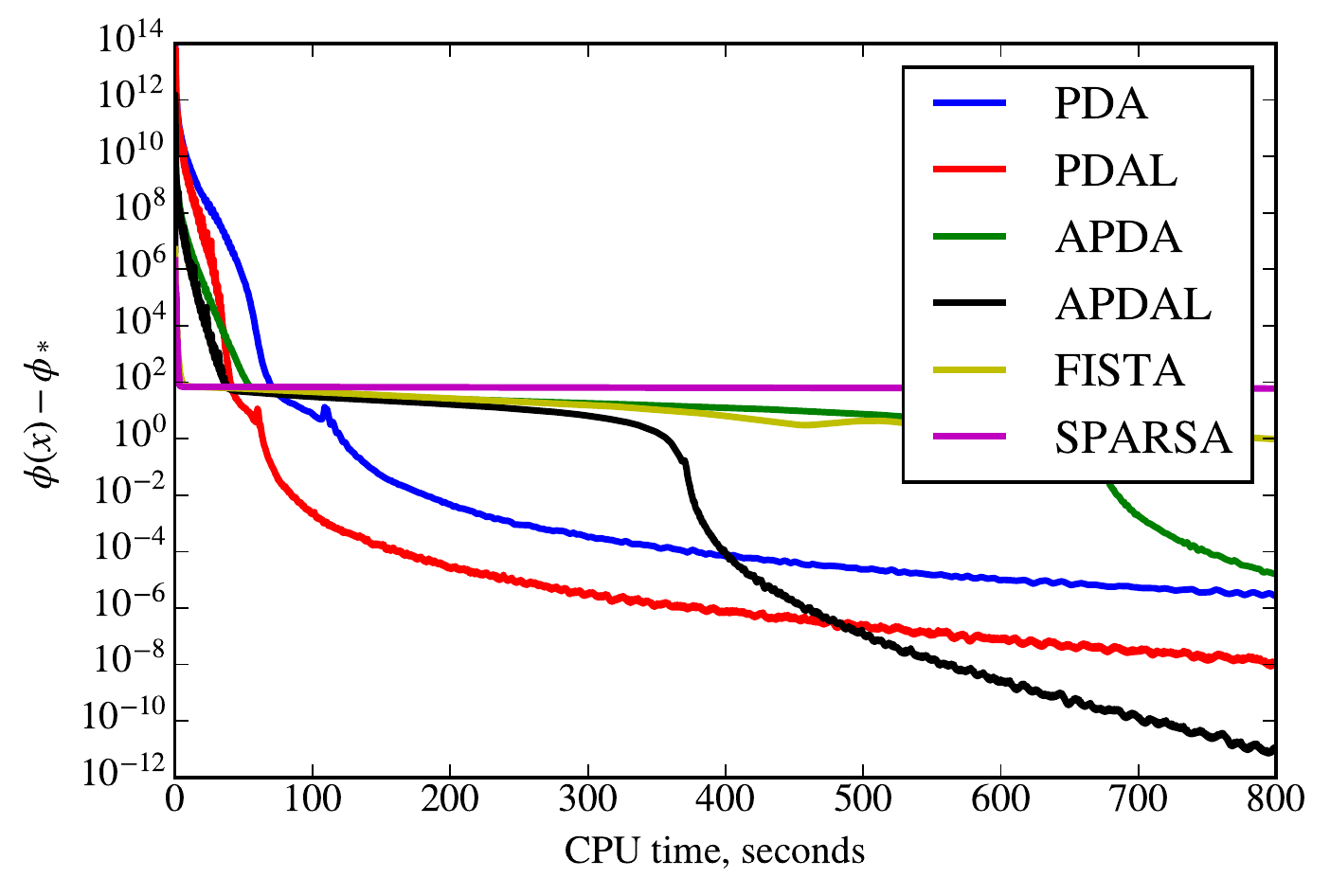}
        \caption{Example 4}
    \end{subfigure}
    \caption{Convergence plots for problem \eqref{l1-reg}}
    \label{fig:l1}
\end{figure}

\subsection{Nonnegative least squares}
Next, we consider another regularized least squares problem:
\begin{equation}\label{eq:nonneg}
  \min_{x\geq 0} \phi(x):=\frac 1 2 \n{Ax-b}^2,
\end{equation}
where $A\in \R^{m\times n}$, $x\in \R^n$, $b\in \R^m$.
Similarly as before, we can express it  as
\begin{equation}\label{noneg_square_saddle}
\min_x \max_y g(x)+\lr{Ax,y}-f^*y),
\end{equation}
where $g(x)=\d_{\R^n_+}(x)$, $f^*(y) = \frac 1 2 \n{y+b}^2$.
For all cases below we generate a random matrix $A\in \R^{m\times n}$
with density $d \in (0,1)$. In order to make the optimal value
$\phi_*=0$, we always generate $w$ as a sparse vector in $\R^n$ whose
$s$ nonzero entries are drawn from the uniform distribution in $[0,100]$.
Then we set $b=Aw$.

We test the performance of the same algorithms as in the previous
example. For FISTA and SpaRSA we use the same parameters as before.
For every instance of the problem, PDA and PDAL use the same
$\beta$. For APDA and APDAL we always set $\b = 1$ and $\c = 0.1$.
The initial points are $x^0 =(0,\dots,0)$ and $y^0=Ax^0-b=-b$.

We generate our data as follows:
\begin{enumerate}

    \item  $m = 2000$, $n=4000$, $d=1$, $s=1000$; the entries of $A$ are
    generated  independently from the uniform distribution in
    $[-1,1]$. $\b = 25$.

    \item $m=1000$, $n=2000$, $d=0.5$, $s=100$; the nonzero entries of $A$ are
    generated  independently from the uniform distribution in
    $[0,1]$. $\b = 25$.

    \item $m=3000$, $n=5000$, $d=0.1$, $s=100$; the nonzero entries of $A$ are
    generated  independently from the uniform distribution in
    $[0,1]$. $\b = 25$.

    \item $m=10000$, $n=20000$, $d=0.01$, $s=500$; the nonzero entries of $A$ are
    generated  independently from the normal distribution $\mathcal{N}(0,1)$. $\b =1$.
\end{enumerate}

As \eqref{eq:nonneg} is again just a regularized least squares problem,
the linesearch does not require any additional matrix-vector
multiplications. The results with $\phi(x^k)-\phi_*$ vs. CPU time are presented in \Cref{fig:nonneg}.
\begin{figure}[ht]
  \begin{subfigure}[b]{0.49\linewidth}
    \centering
    \includegraphics[width=\linewidth]{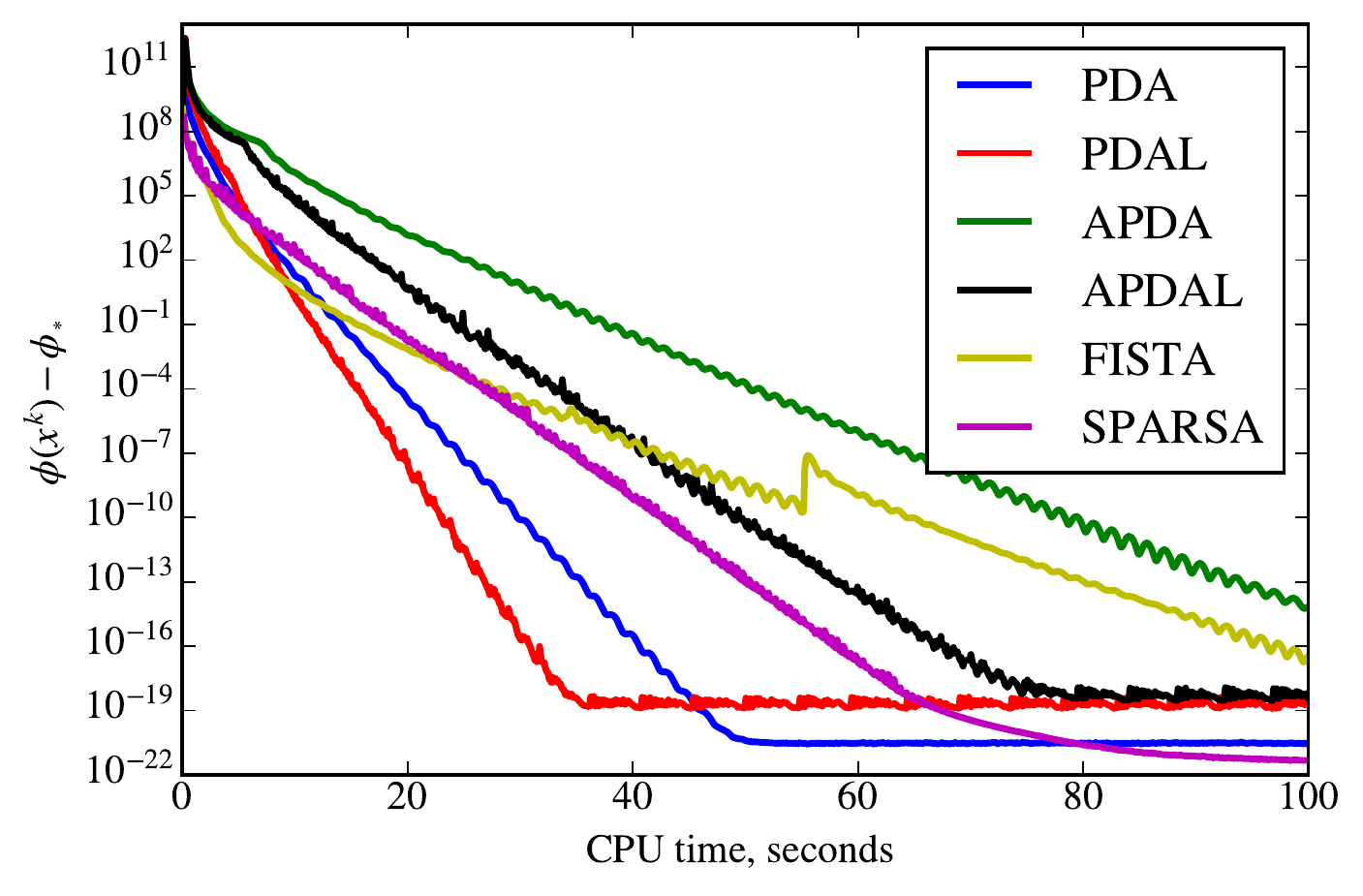}
    \caption{Example 1}
    \vspace{4ex}
  \end{subfigure}
  \begin{subfigure}[b]{0.49\linewidth}
    \centering
    \includegraphics[width=\linewidth]{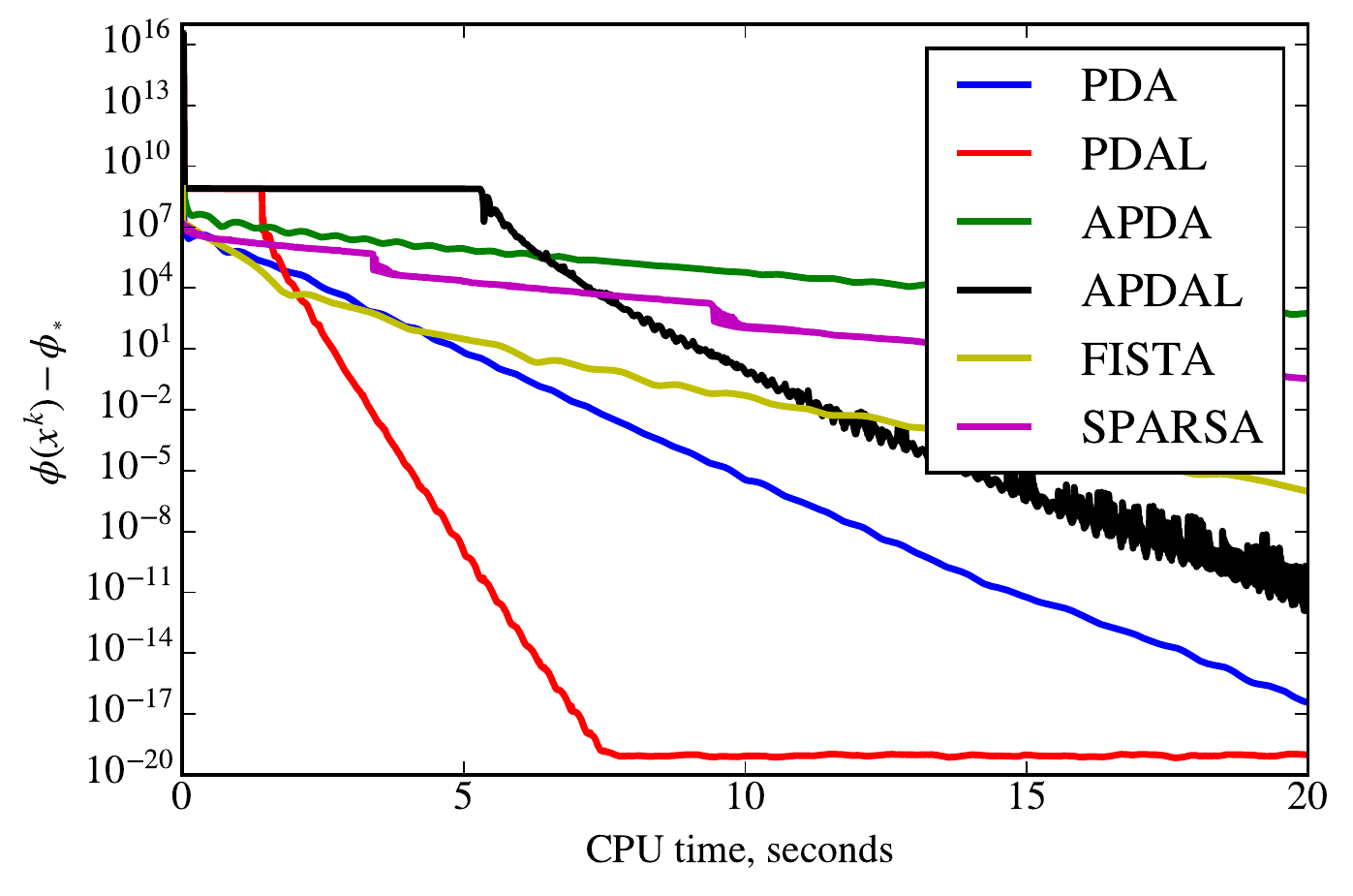}
    \caption{Example 2}
    \vspace{4ex}
\end{subfigure}

\begin{subfigure}[b]{0.49\linewidth}
    \centering
    \includegraphics[width=\linewidth]{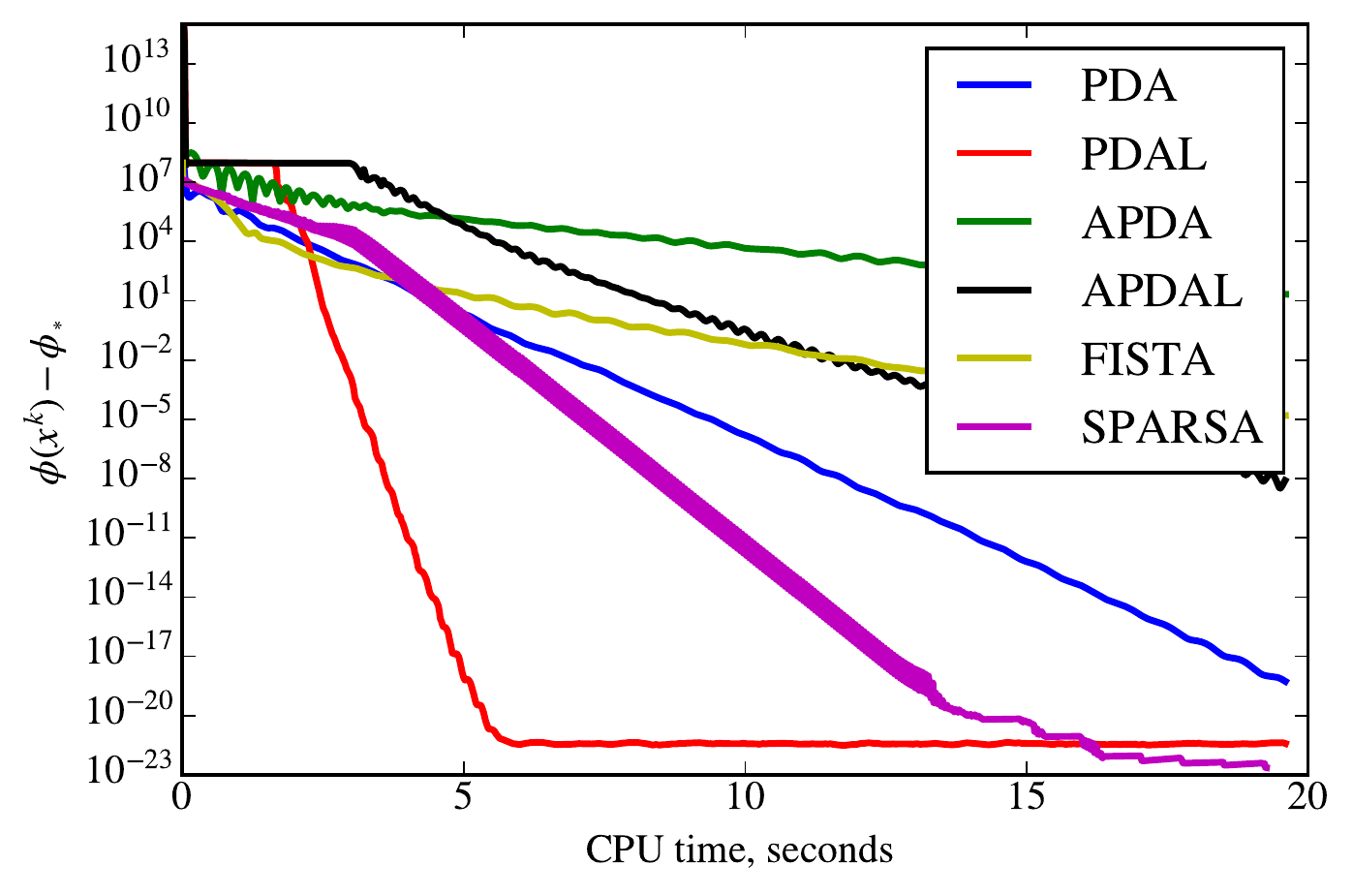}
    \caption{Example 3}
  \end{subfigure}
  \begin{subfigure}[b]{0.49\linewidth}
    \centering
    \includegraphics[width=\linewidth]{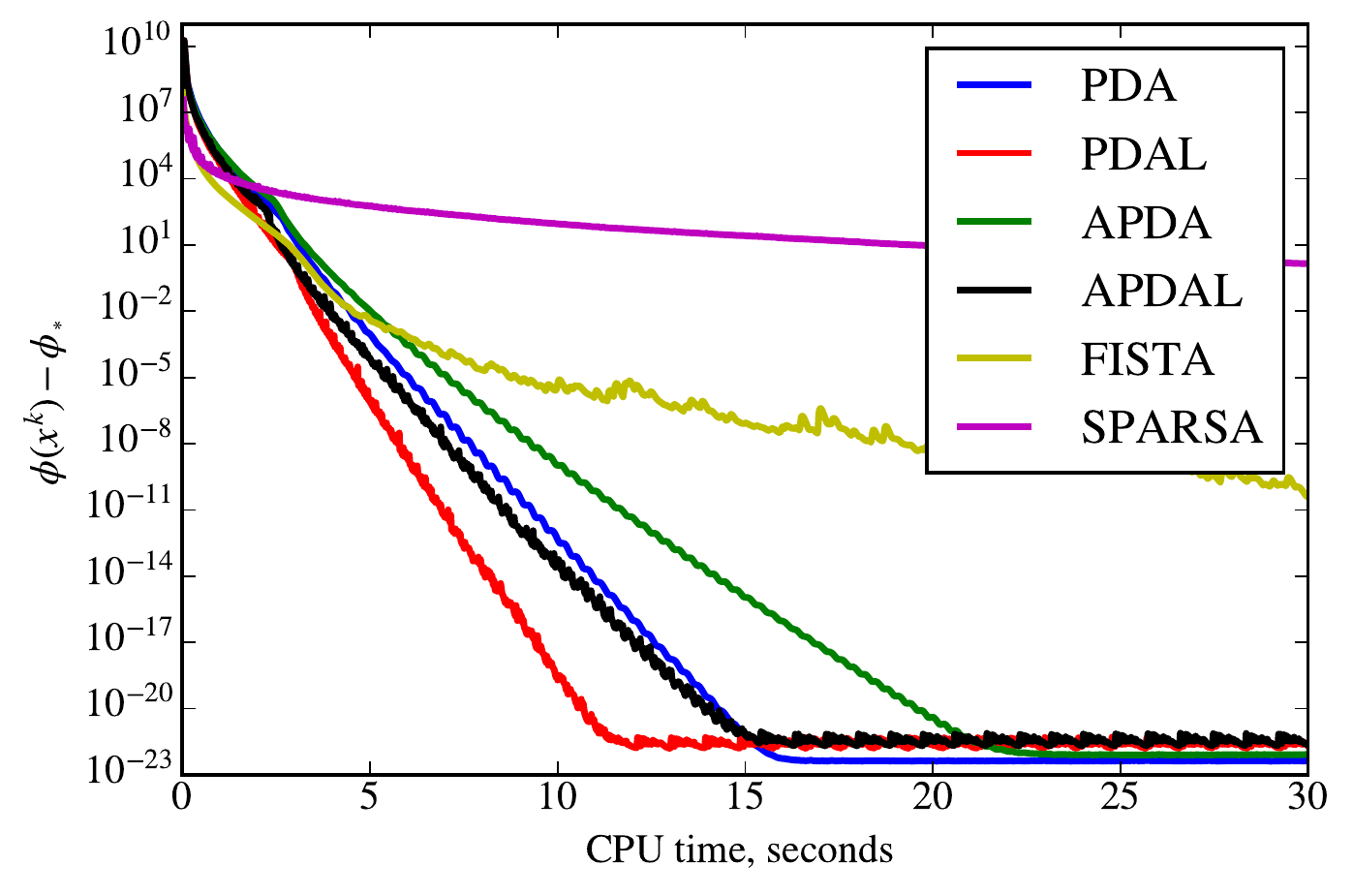}
    \caption{Example 4}
\end{subfigure}

  \caption{Convergence plots for problem \eqref{eq:nonneg}}
  \label{fig:nonneg}
\end{figure}

Although primal-dual methods may converge faster, they require tuning
$\beta = \s/\t$. It is also interesting to highlight that sometimes non-accelerated methods
with a properly chosen ratio $\s/\t$ can be faster than their
accelerated variants.

\section{Conclusion}
In this work, we have presented several primal-dual algorithms with
linesearch. On the one hand, this allows us to avoid the evaluation of the
operator norm, and on the other hand, it allows us to make
larger steps.  The proposed linesearch is very simple and in many important
cases it does not require any additional expensive operations (such as
matrix-vector multiplications or prox-operators).  For all
methods we have proved convergence. Our experiments confirm
the numerical efficiency of the proposed methods.

% \acknowledgment
\vspace{0.5cm} {\noindent {\bfseries Acknowledgements:} The work is
supported by the Austrian science fund (FWF) under the project
"Efficient Algorithms for Nonsmooth Optimization in Imaging" (EANOI)
No. I1148. The authors also would like to thank the referees and the
SIOPT editor Wotao Yin for their careful reading of the manuscript
and their numerous helpful comments.  }

\bibliographystyle{abbrv}
\bibliography{publication}

\begin{thebibliography}{10}

\bibitem{agarwal2010fast}
A.~Agarwal, S.~Negahban, and M.~J. Wainwright.
\newblock Fast global convergence rates of gradient methods for
  high-dimensional statistical recovery.
\newblock In {\em Adv. Neur. In.}, pages 37--45, 2010.

\bibitem{baucomb}
H.~H. Bauschke and P.~L. Combettes.
\newblock {\em Convex Analysis and Monotone Operator Theory in Hilbert Spaces}.
\newblock Springer, New York, 2011.

\bibitem{fista}
A.~Beck and M.~Teboulle.
\newblock A fast iterative shrinkage-thresholding algorithm for linear inverse
  problem.
\newblock {\em SIAM J. Imaging Sci.}, 2(1):183--202, 2009.

\bibitem{chambolle2011first}
A.~Chambolle and T.~Pock.
\newblock A first-order primal-dual algorithm for convex problems with
  applications to imaging.
\newblock {\em J. Math. Imaging. Vis.}, 40(1):120--145, 2011.

\bibitem{pock:ergodic}
A.~Chambolle and T.~Pock.
\newblock On the ergodic convergence rates of a first-order primal--dual
  algorithm.
\newblock {\em Math. Program.}, pages 1--35, 2015.

\bibitem{chambolle2016introduction}
A.~Chambolle and T.~Pock.
\newblock An introduction to continuous optimization for imaging.
\newblock {\em Acta Numerica}, 25:161--319, 2016.

\bibitem{Condat2013}
L.~Condat.
\newblock A primal--dual splitting method for convex optimization involving
  lipschitzian, proximable and linear composite terms.
\newblock {\em J. Optimiz. Theory App.}, 158(2):460--479, 2013.

\bibitem{duchi2008efficient}
J.~Duchi, S.~Shalev-Shwartz, Y.~Singer, and T.~Chandra.
\newblock Efficient projections onto the $l_1$-ball for learning in high
  dimensions.
\newblock In {\em Proceedings of the 25th international conference on Machine
  learning}, pages 272--279, 2008.

\bibitem{goldstein2015adaptive}
T.~Goldstein, M.~Li, and X.~Yuan.
\newblock Adaptive primal-dual splitting methods for statistical learning and
  image processing.
\newblock In {\em Advances in Neural Information Processing Systems}, pages
  2089--2097, 2015.

\bibitem{goldstein2013adaptive}
T.~Goldstein, M.~Li, X.~Yuan, E.~Esser, and R.~Baraniuk.
\newblock Adaptive primal-dual hybrid gradient methods for saddle-point
  problems.
\newblock {\em arXiv preprint arXiv:1305.0546}, 2013.

\bibitem{he2000alternating}
B.~He, H.~Yang, and S.~Wang.
\newblock Alternating direction method with self-adaptive penalty parameters
  for monotone variational inequalities.
\newblock {\em J. Optimiz. Theory App.}, 106(2):337--356, 2000.

\bibitem{he-yuan:2012}
B.~He and X.~Yuan.
\newblock Convergence analysis of primal-dual algorithms for a saddle-point
  problem: From contraction perspective.
\newblock {\em SIAM J. Imag. Sci.}, 5(1):119--149, 2012.

\bibitem{komodakis:playing}
N.~Komodakis and J.~C. Pesquet.
\newblock Playing with duality: An overview of recent primal-dual approaches
  for solving large-scale optimization problems.
\newblock {\em IEEE Signal Proc. Mag.}, 32(6):31--54, 2015.

\bibitem{malitsky15}
Y.~Malitsky.
\newblock Reflected projected gradient method for solving monotone variational
  inequalities.
\newblock {\em SIAM J. Optimiz.}, 25(1):502--520, 2015.

\bibitem{malitsky2016proximal}
Y.~Malitsky.
\newblock Proximal extrapolated gradient methods for variational inequalities.
\newblock {\em Optimization Methods and Software}, 33(1):140--164, 2018.

\bibitem{pock2011diagonal}
T.~Pock and A.~Chambolle.
\newblock Diagonal preconditioning for first order primal-dual algorithms in
  convex optimization.
\newblock In {\em Computer Vision (ICCV), 2011 IEEE International Conference
  on}, pages 1762--1769. IEEE, 2011.

\bibitem{tseng00}
P.~Tseng.
\newblock A modified forward-backward splitting method for maximal monotone
  mappings.
\newblock {\em SIAM J. Control}, 38:431--446, 2000.

\bibitem{vu2013splitting}
B.~C. V{\~u}.
\newblock A splitting algorithm for dual monotone inclusions involving
  cocoercive operators.
\newblock {\em Advances in Computational Mathematics}, 38(3):667--681, 2013.

\bibitem{wright2009sparse}
S.~J. Wright, R.~D. Nowak, and M.~A. Figueiredo.
\newblock Sparse reconstruction by separable approximation.
\newblock {\em IEEE Transactions on Signal Processing}, 57(7):2479--2493, 2009.

\end{thebibliography}

\end{document}